\title[Embeddings of the Heisenberg group]{$L_1$ embeddings of the Heisenberg group and fast estimation of graph isoperimetry}
\author[Assaf Naor]{Assaf Naor\thanks{Research supported in part by NSF grants CCF-0635078 and CCF-0832795, BSF grant 2006009, and the Packard Foundation.}
}
\newtheorem{theorem}{Theorem}[section]
\newtheorem{lemma}[theorem]{Lemma}
\newtheorem{cor}[theorem]{Corollary}
\theoremstyle{definition}
\newtheorem{definition}[theorem]{Definition}
\newcommand{\R}{\mathbb{R}}
\newcommand{\eqdef}{\stackrel{\mathrm{def}}{=}}
\newcommand{\1}{\mathbf{1}}
\newcommand{\M}{\mathscr{M}}
\newcommand{\N}{\mathbb{N}}
\newcommand{\C}{\mathbb{C}}
\renewcommand{\H}{\mathbb{H}}
\newcommand{\Z}{\mathbb{Z}}
\renewcommand{\le}{\leqslant}
\renewcommand{\ge}{\geqslant}
\renewcommand{\leq}{\leqslant}
\newcommand{\Y}{\mathscr{Y}}
\newcommand{\e}{\varepsilon}
\newcommand{\f}{\varphi}
\newtheorem{rem}[theorem]{Remark}
\renewcommand{\L}{\mathscr{L}}
\newcommand{\Cut}{\mathrm{Cut}}
\begin{document}

\begin{abstract} We survey connections between the theory of
bi-Lipschitz embeddings and the Sparsest Cut Problem in
combinatorial optimization. The story of the Sparsest Cut Problem is
a striking example of the deep interplay between analysis, geometry,
and probability on the one hand, and computational issues in
discrete mathematics on the other.
 We
explain how the key ideas evolved over the past 20 years,
emphasizing the interactions with Banach space theory, geometric
measure theory, and geometric group theory.
As an important illustrative example, we shall examine recently
established connections to the the structure
of the Heisenberg group, and the incompatibility of its
Carnot-Carath\'eodory geometry with the geometry of the Lebesgue space $L_1$.
\end{abstract}

\begin{classification}
46B85, 30L05, 46B80, 51F99.
\end{classification}

\begin{keywords}
Bi-Lipschitz embeddings, Sparsest Cut Problem, Heisenberg group.
\end{keywords}

\maketitle



\section{Introduction}

Among the common definitions of the Heisenberg group $\H$, it will
be convenient for us to work here with $\H$ modeled as $\R^3$,
equipped with the group product $(a,b,c)\cdot (a',b',c')\eqdef
(a+a',b+b',c+c'+ab'-ba')$. The integer lattice $\Z^3$ is then a
discrete cocompact subgroup of $\H$, denoted by $\H(\Z)$, which is
generated by the finite symmetric set $\{(\pm1,0,0),(0,\pm1,0),(0,0,\pm1)\}$.
The word metric on $\H(\Z)$ induced by this generating set will be
denoted by $d_W$.

As noted by Semmes~\cite{Sem96}, a  differentiability
result of Pansu~\cite{Pan89} implies that the metric space
$(\H(\Z),d_W)$ does not admit a bi-Lipschitz embedding into $\R^n$
for any $n\in \N$. This was extended by Pauls~\cite{Pau01} to
bi-Lipschitz non-embeddability results of $(\H(\Z),d_W)$ into metric
spaces with either lower or upper curvature bounds in the sense of
Alexandrov. In~\cite{LN06,CK06-RNP} it was observed that Pansu's
differentiability argument extends to Banach space targets with the
Radon-Nikod\'ym property (see~\cite[Ch.\ 5]{benlin}), and hence
$\H(\Z)$ does not admit a bi-Lipschitz embedding into, say, a Banach
space which is either reflexive or is a separable dual; in
particular $\H(\Z)$ does not admit a bi-Lipschitz embedding into any
$L_p(\mu)$ space, $1<p<\infty$, or into the sequence space $\ell_1$.

The embeddability of $\H(\Z)$ into the function space $L_1(\mu)$,
when $\mu$ is non-atomic, turned out to be much harder to settle.
This question is of particular importance since it is well
understood that for $\mu$ non-atomic, $L_1(\mu)$ is a space for
which the differentiability results quoted above manifestly break
down. Nevertheless, Cheeger and Kleiner~\cite{CK-Cras,ckbv}
introduced a novel notion of differentiability for which they could
prove a differentiability theorem for Lipschitz maps from the
Heisenberg group to $L_1(\mu)$, thus establishing that $\H(\Z)$ does
not admit a bi-Lipschitz embedding into any $L_1(\mu)$ space.

Another motivation for the $L_1(\mu)$ embeddability question for
$\H(\Z)$ originates from~\cite{LN06}, where it was established that
it is connected to the Sparsest Cut Problem in the field of
combinatorial optimization. For this application it was of
importance to obtain quantitative estimates in the $L_1(\mu)$
non-embeddability results for $\H(\Z)$. It turns out that
establishing such estimates is quite subtle, as they require
overcoming finitary issues that do not arise in the infinite setting
of~\cite{ckbv,ckmetmon}. The following two theorems were proved
in~\cite{ckn,CKN09}. Both theorems follow painlessly from a more
general theorem that is stated and discussed in
Section~\ref{sec:compression}.
\begin{theorem}\label{thm:ckn}
There exists a universal constant $c>0$ such that any
embedding into $L_1(\mu)$ of the restriction of the word metric
$d_W$ to the $n\times n\times n$ grid $\{1,\ldots,n\}^3$ incurs
distortion $\gtrsim (\log n)^c$.
\end{theorem}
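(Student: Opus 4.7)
The plan is to combine the cut-cone representation of $L_1$-embeddable metrics with a quantitative version of the Cheeger--Kleiner monotonicity theorem that underlies their qualitative $L_1$ non-embeddability result. Since any finite subset of $L_1(\mu)$ embeds isometrically into some $\ell_1^N$, every $L_1$ (semi)metric on a finite set admits a representation
\[
\|f(x)-f(y)\|_1 \;=\; \sum_{E\subset \{1,\ldots,n\}^3} c_E\,\bigl|\1_E(x)-\1_E(y)\bigr|,\qquad c_E\ge 0.
\]
Thus it suffices to establish a single-cut inequality for the grid: a linear functional $\Psi$ on semimetrics that is $\asymp 1$ when applied to $d_W$ but at most $(\log n)^{-c}$ when applied to any cut semimetric $\rho_E(x,y)=|\1_E(x)-\1_E(y)|$. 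Positivity of the coefficients $c_E$ and linearity of $\Psi$ would then yield the desired distortion lower bound.

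For $\Psi$ I would use a commutator-based test that compares ``vertical'' pair distances with sums of horizontal-edge distances along the standard commutator word. Setting $h_1=(1,0,0)$ and $h_2=(0,1,0)$ and expanding the Heisenberg product gives $h_1^k h_2^k h_1^{-k}h_2^{-k}=(0,0,2k^2)$, a purely central element at word distance $\asymp k$ from the origin. So the ratio of $d_W\!\bigl(x,x\cdot(0,0,2k^2)\bigr)$ to the $\ell_1$-length of the commutator word is $\asymp 1/k$, whereas for any Euclidean half-space cut the analogous ratio is at most $O(1/k^2)$, since half-spaces have only a set of measure zero separating the two vertical endpoints. This square-root-versus-linear gap is the source of non-embeddability, and it must be quantified simultaneously at every dyadic scale $r\in\{1,2,4,\ldots,n\}$.

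The heart of the argument is a quantitative stability theorem asserting that any cut $E\subset\{1,\ldots,n\}^3$ which ``fools'' the commutator inequality at scale $r$ must be $\e(r)$-close in $L^1$, on Heisenberg balls of radius $r$, to a horizontal Euclidean half-space. This is the finitary substitute for the Cheeger--Kleiner blow-up/differentiation argument showing that monotone subsets of $\H$ are half-spaces. With such a stability bound in place, one would decompose each cut $E$ into its good and bad scales, control each contribution separately, and sum over the $\log n$ dyadic scales up to $n$. Tracking how a polynomial error $\e(r)=r^{-\alpha}$ propagates across scales yields an aggregate loss of at most $(\log n)^c$ for an explicit $c>0$, which pulls through the cut-cone decomposition to $f$.

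The principal obstacle is precisely the production of the quantitative stability theorem with a genuinely polynomial rate, rather than the logarithmic or qualitative rate that would come for free from compactness and Pansu differentiation. One must establish an $\e$-regularity statement for discrete approximately-monotone sets in the Heisenberg grid by hand, and verify that the exponent $\alpha$ survives the dyadic summation without collapsing to zero. This is the content of the more general theorem advertised in Section~\ref{sec:compression}; granted it, Theorem~\ref{thm:ckn} follows by specializing the test functional $\Psi$ to the commutator configurations described above and tuning the scales to the range $[1,n]$.
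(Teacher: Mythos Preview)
Your high-level plan---cut-cone decomposition, a stability theorem reducing general cuts to half-spaces, and a multi-scale summation---matches the architecture of the paper's argument, but several load-bearing pieces are either missing or misstated. The paper does not work directly on the discrete grid: it first extends the embedding to a Lipschitz map $\bar f:[1,n]^3\to L_1$ (via the extension theorem of~\cite{LN05} or a partition of unity) and then invokes the quantitative central collapse Theorem~\ref{thm:quant} on the continuous group; the discrete conclusion follows by rounding to nearby lattice points. This passage to the continuum is what makes perimeter/BV technology available, and perimeter bounds are essential in the proof of Theorem~\ref{thm:quant}: they are used both to locate a scale at which most of the cut measure is carried by almost-monotone sets, and (via the Heisenberg isoperimetric inequality) to show that the remaining ``bad'' cuts, though possibly of infinite $\Sigma_f$-mass, contribute negligibly to the metric. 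Your proposal has no analogue of this mechanism, so you have no way to control cuts that are far from half-spaces at every scale; a single linear functional $\Psi$ must dominate \emph{every} cut semimetric, and it is precisely the bad cuts that make this hard.

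Second, the stability input you invoke is not the one that is actually proved. Theorem~\ref{thm:stability} says that a set with small \emph{non-monotonicity along horizontal lines} is close to a half-space; it says nothing about sets that ``fool a commutator inequality'', and bridging these two notions is not automatic. Moreover, the half-space produced by Theorem~\ref{thm:stability} is an \emph{arbitrary} affine half-space, not one with vertical boundary (and certainly not a ``horizontal'' one); the paper needs a separate endgame argument, sketched in the last paragraph of Section~\ref{sec:compression}, to show that cut measures supported on general almost-half-spaces still force collapse along cosets of the center. Finally, your commutator arithmetic is off: $d_W\bigl(e,(0,0,2k^2)\bigr)\asymp k$ while the commutator word has length $4k$, so the ratio for $d_W$ is $\asymp 1$, not $\asymp 1/k$; and for a half-space with vertical boundary the cut-distance between $x$ and $x\cdot(0,0,2k^2)$ is exactly $0$, not $O(1/k^2)$. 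The qualitative gap you are after is real, but the quantitative bookkeeping, and the handling of non-vertical half-spaces, both need to be redone.
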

Following Gromov~\cite{Gro93}, the compression rate of $f:\H(\Z)\to
L_1(\mu)$, denoted $\omega_f(\cdot)$, is defined as the largest
non-decreasing function such that for all $x,y\in \H(\Z)$ we have
$\|f(x)-f(y)\|_1\ge \omega_f(d_W(x,y))$ (see~\cite{ADS06} for more
information on this topic).
\begin{theorem}\label{thm:rate} There exists a universal constant
$c>0$ such that for every function $f:\H(\Z)\to L_1(\mu)$ which is
$1$-Lipschitz with respect to the word metric $d_W$, we have
$\omega_f(t)\lesssim t/(\log t)^c$ for all $t\ge 2$.
\end{theorem}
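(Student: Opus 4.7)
The plan is to reduce Theorem~\ref{thm:rate} to the distortion lower bound of Theorem~\ref{thm:ckn} by exploiting the self-similar structure of $\H(\Z)$: at every scale $R\ge 2$, the word ball of radius $R$ contains a translate of the grid $G_n\eqdef \{1,\ldots,n\}^3$ with $n$ of order $R$. Fix $t\ge 2$ and take $n$ of order $t$. The restriction of the given $1$-Lipschitz map $f$ to such a $G_n$ is still $1$-Lipschitz, so Theorem~\ref{thm:ckn} produces a pair $x,y\in G_n$ with
$$\|f(x)-f(y)\|_1 \;\le\; \frac{d_W(x,y)}{(\log n)^c}.$$

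The main obstacle is that the pair produced here may in principle have $d_W(x,y)$ much smaller than $n$, in which case the bound only controls $\omega_f$ at some unknown small scale rather than at scale $t$. To conclude, one needs the bad pair to live at the \emph{macroscopic} scale $d_W(x,y)\gtrsim n$. I would therefore aim to prove a scale-controlled strengthening of Theorem~\ref{thm:ckn}: for every sufficiently large $n$ and every $1$-Lipschitz $f:G_n\to L_1(\mu)$, there exist $x,y\in G_n$ with $d_W(x,y)\gtrsim n$ \emph{and} $\|f(x)-f(y)\|_1\lesssim n/(\log n)^c$. This is presumably the content of the ``more general theorem'' alluded to in the excerpt. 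Granted it, Theorem~\ref{thm:rate} is immediate: the pair yields $\omega_f(c n)\lesssim n/(\log n)^c \lesssim t/(\log t)^c$, and monotonicity of $\omega_f$ extends the bound to all $t\ge 2$.

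The hard part is establishing this scale-controlled strengthening, and my plan would follow the Cheeger--Kleiner paradigm. Using the cut-measure representation of $L_1(\mu)$-valued maps, one writes $\|f(x)-f(y)\|_1 = \int |\1_E(x)-\1_E(y)|\, d\Sigma(E)$, reducing the question to understanding monotone cuts $E\subset \H(\Z)$. A Pansu-type differentiation statement asserts that, at fine scales, a monotone set is well approximated by a vertical half-space. The core technical step is then a \emph{finitary} version of this fact --- a quantitative vertical-perimeter / isoperimetric inequality for monotone subsets of the word-ball of radius $n$, yielding approximation by vertical half-spaces with an error decaying as an inverse power of $\log n$. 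Integrating this flatness against $\Sigma$ produces pairs at the diameter scale on which $f$ is nearly affine, forcing the desired compression drop by a factor $(\log n)^c$. This finitary isoperimetric inequality for monotone sets is where the exponent $c$ is generated, and it constitutes the true technical heart of the proof.
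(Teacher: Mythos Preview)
Your overall architecture --- reduce Theorem~\ref{thm:rate} to a single quantitative collapse statement and prove that via the Cheeger--Kleiner cut-measure machinery --- is the paper's route, but two specifics diverge.

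First, the intermediate result the paper actually invokes (Theorem~\ref{thm:quant}) is not your ``scale-controlled strengthening''. It is stated on the \emph{continuous} group: for a $1$-Lipschitz $f:B(p,1)\to L_1$ and any $\e\in(0,\tfrac14)$ there is \emph{some} scale $r\ge\e$ at which a positive-measure family of pairs on center cosets inside balls $B(x,r)$ is compressed by the factor $(\log(1/\e))^c$. Both Theorem~\ref{thm:ckn} and Theorem~\ref{thm:rate} then follow by extending the discrete map Lipschitzly to $\H$, rescaling, applying Theorem~\ref{thm:quant}, and rounding back to $\H(\Z)$. Note the collapse is only asserted at some scale $r\ge\e$, not at the top scale; your demand that $d_W(x,y)\gtrsim n$ is a genuinely stronger statement than what the paper proves, and is not needed for the deduction.

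Second, your account of the hard part misidentifies the mechanism. Pansu differentiation is exactly what \emph{fails} for $L_1$ targets --- that is why Cheeger--Kleiner differentiation is required in the first place. And the paper does not prove a ``vertical-perimeter/isoperimetric inequality for monotone sets''; that phrase describes a separate, later line of work. The actual argument is: (i) a total-perimeter bound on the cut measure is used to locate a scale at which most cuts are quantitatively \emph{almost monotone}; (ii) a stability theorem (Theorem~\ref{thm:stability}) shows that an almost-monotone set in a ball is close to a half-space on a smaller concentric ball --- a \emph{general} half-space, the paper explicitly stresses it need not be vertical; (iii) a cut measure supported on half-spaces forces $\|f(x_1)-f(x_3)\|_1=\|f(x_1)-f(x_2)\|_1+\|f(x_2)-f(x_3)\|_1$ whenever $x_2$ lies between $x_1$ and $x_3$ on any affine line, and this additivity is incompatible with the square-root behaviour of $d^\H$ along center cosets. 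Step~(iii) --- additivity versus $\sqrt{\,\cdot\,}$ --- is what actually produces the collapse, and it is absent from your outline.
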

 Evaluating the supremum of those $c>0$ for which
Theorem~\ref{thm:ckn} holds true remains an important open question,
with geometric significance as well as importance to theoretical
computer science. Conceivably we could get $c$ in
Theorem~\ref{thm:ckn} to be arbitrarily close to $\frac12$, which
would be sharp since the results of~\cite{Ass83,Rao99} imply (see
the explanation in~\cite{GKL03}) that the metric space
$\left(\{1,\ldots,n\}^3,d_W\right)$ embeds into $\ell_1$ with
distortion $\lesssim \sqrt{\log n}$. Similarly, we do not know the
best possible $c$ in Theorem~\ref{thm:rate}; $\frac12$ is again the
limit here since it was shown in~\cite{Tess08} that there exists a
$1$-Lipschitz mapping $f:\H(\Z)\to \ell_1$ for which
$\omega_f(t)\gtrsim t/(\sqrt{\log t}\cdot \log \log t)$.

The purpose of this article is to describe the above non-embeddability
results for the Heisenberg group. Since one of the motivations
for these investigations is the application to the Sparsest Cut
Problem, we also include here a detailed discussion of this problem
from theoretical computer science, and its deep connections to
 metric geometry. Our goal is to present the ideas in
a way that is accessible to mathematicians who do not necessarily have
background in computer science.

\bigskip
\noindent {\bf Acknowledgements.} I am grateful to the
following people for helpful comments and suggestions on earlier versions of this
manuscript: Tim Austin, Keith Ball, Subhash Khot, Bruce Kleiner, Russ Lyons,
Manor Mendel, Gideon Schechtman, Lior Silberman.


\section{Embeddings}\label{SEC:EMBED}

A metric space $(\M,d_\M)$ is said to embed with distortion $D\ge 1$
into a metric space $(\Y,d_Y)$ if there exists a mapping $f:\M\to
\Y$, and a scaling factor $s>0$, such that for all $x,y\in \M$ we
have $sd_\M(x,y)\le d_\Y(f(x),f(y))\le Dsd_\M(x,y)$. The infimum
over those $D\ge 1$ for which $(\M,d_\M)$ embeds with distortion $D$
into $(\Y,d_Y)$ is denoted by $c_\Y(\M)$. If $(\M,d_\M)$ does not
admit a bi-Lipschitz embedding into $(\Y,d_Y)$, we will write
$c_\Y(\M)=\infty$.

Throughout this paper, for $p\ge 1$, the space $L_p$ will stand for
$L_p([0,1],\lambda)$, where $\lambda$ is Lebesgue measure. The
spaces $\ell_p$ and $\ell_p^n$ will stand for the space of
$p$-summable infinite sequences, and $\R^n$ equipped with the
$\ell_p$ norm, respectively. Much of this paper will deal with
bi-Lipschitz embeddings of {\em finite} metric spaces into $L_p$.
Since every $n$-point subset of an $L_p(\Omega,\mu)$ space embeds
isometrically into $\ell_p^{n(n-1)/2}$ (see the discussion
in~\cite{Bal90}), when it comes to embeddings of finite metric
spaces, the distinction between different $L_p(\Omega,\mu)$ spaces
is irrelevant. Nevertheless, later, in the study of the
embeddability of the Heisenberg group, we will need to distinguish
between sequence spaces and function spaces.

For $p\ge 1$ we will use the shorter notation $c_p(\M)=
c_{L_p}(\M)$. The parameter $c_2(\M)$ is known as the Euclidean
distortion of $\M$. Dvoretzky's theorem says that if $\Y$ is an
infinite dimensional Banach space then $c_\Y(\ell_2^n)=1$ for all
$n\in \N$. Thus, for every finite metric space $\M$ and every
infinite dimensional Banach space $\Y$, we have $c_2(\M)\ge
c_\Y(\M)$.

The following famous theorem of Bourgain~\cite{Bou85} will play a
key role in what follows:
\begin{theorem}[Bourgain's embedding theorem~\cite{Bou85}]\label{thm:bourgain}
For every $n$-point metric space $(\M,d_\M)$, we have
\begin{equation}\label{eq:bourgain estimate}
c_2(\M)\lesssim \log n.
\end{equation}
\end{theorem}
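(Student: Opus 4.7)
The plan is to construct a random Fr\'echet-type embedding of $\M$ into Euclidean space whose coordinates are distances to random subsets of $\M$, and to verify by a scale-by-scale probabilistic analysis that it attains distortion $O(\log n)$. Concretely, for each scale $k\in\{0,1,\ldots,\lfloor\log_2 n\rfloor\}$ and each index $j\in\{1,\ldots,\lceil C\log n\rceil\}$ (with $C$ a large absolute constant), I would sample an independent random subset $A_{k,j}\subseteq \M$ by retaining each point with probability $2^{-k}$, and define $f:\M\to \R^N$ with $N=O((\log n)^2)$ coordinates by $f(x)_{k,j}\eqdef d_\M(x,A_{k,j})$, where $d_\M(x,A)\eqdef \min_{a\in A}d_\M(x,a)$. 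The upper Lipschitz estimate $\|f(x)-f(y)\|_2\le \sqrt{N}\,d_\M(x,y)\lesssim (\log n)\, d_\M(x,y)$ is immediate and deterministic, since each coordinate is $1$-Lipschitz in $d_\M$ by the triangle inequality.

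The heart of the matter --- and the step I expect to be the main obstacle --- is the matching non-contractive lower bound $\|f(x)-f(y)\|_2\gtrsim d_\M(x,y)$. Fix distinct $x,y\in\M$ with $\Delta\eqdef d_\M(x,y)$. The goal is to exhibit ``separating radii'' $0=r_0<r_1<\cdots<r_K$, with $K=O(\log n)$ and $r_K\gtrsim \Delta$, such that at each scale $k$ a random subset $A$ of density $2^{-k}$ has constant probability of meeting the ball of radius $r_{k-1}$ around one of $x,y$ while missing the ball of radius $r_k$ around the other. The natural choice is for $r_k$ to be a truncation at $\Delta/4$ of the smallest $t$ at which the balls of radius $t$ around \emph{both} $x$ and $y$ first acquire $\ge 2^k$ points: a first-moment computation then controls the ``miss'' probability (at radius $r_{k-1}$, at least one of the two balls has at most $2^{k-1}$ points, so $A$ avoids it with probability at least $1/2$), and independence of the inclusions in $A$ controls the ``hit'' probability (at radius $r_k$ each ball has at least $2^k$ points, so $A$ meets it with probability at least $1-e^{-1}$). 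Truncating at $\Delta/4$ is what forces the two balls in play at each scale to be disjoint, thereby making the two constituent events independent; when the separation event occurs at scale $(k,j)$, the corresponding coordinate contributes at least $(r_k-r_{k-1})^2$ to $\|f(x)-f(y)\|_2^2$.

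A Chernoff bound over the $C\log n$ trials at each scale, combined with a union bound over the $\binom{n}{2}$ pairs, then gives, with probability at least $\tfrac12$, the estimate
\[
\|f(x)-f(y)\|_2^2\;\gtrsim\;(\log n)\sum_{k=1}^{K}(r_k-r_{k-1})^2
\]
uniformly in $x\ne y$. Finally, the telescoping identity $\sum_{k=1}^K(r_k-r_{k-1})=r_K\gtrsim \Delta$ combined with Cauchy--Schwarz yields $\sum_{k=1}^K(r_k-r_{k-1})^2\gtrsim \Delta^2/K\gtrsim \Delta^2/\log n$, so the two factors of $\log n$ cancel to give $\|f(x)-f(y)\|_2\gtrsim \Delta$. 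This matches the upper bound up to a factor of $O(\log n)$, proving \eqref{eq:bourgain estimate}.
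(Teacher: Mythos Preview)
The paper does not prove Theorem~\ref{thm:bourgain}: it quotes the result from~\cite{Bou85} and directs the reader to~\cite[Ch.~15]{Mat02} for a proof. (The \LaTeX\ source even contains a commented-out sketch following~\cite{Mat02}, which uses exactly the random-subset/growth-radius mechanism you propose, only recast as a deterministic embedding into $\ell_2^{2^\M}$ with coordinates weighted by the subset probabilities, so that the lower bound is obtained in expectation rather than via Chernoff plus a union bound.) Your plan is the standard argument, and in outline it is correct and aligned with the approach the paper points to.

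There is one slip worth fixing. You correctly name the separating event as ``$A$ meets $B(y,r_{k-1})$ and misses $B(x,r_k)$'', but the parenthetical justifications are swapped and one of them is false. For the \emph{miss} at radius $r_k$: the defining property of $r_k$ says that the \emph{open} ball of radius $r_k$ about one of $x,y$ has fewer than $2^k$ points, so a density-$2^{-k}$ sample avoids it with probability at least $(1-2^{-k})^{2^k-1}\ge 1/4$. For the \emph{hit} at radius $r_{k-1}$: by definition of $r_{k-1}$ the closed ball $B(y,r_{k-1})$ has at least $2^{k-1}$ points, so it is hit with probability at least $1-(1-2^{-k})^{2^{k-1}}\ge 1-e^{-1/2}$. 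Your sentence ``at radius $r_{k-1}$, at least one of the two balls has at most $2^{k-1}$ points'' is false as written: at $r_{k-1}$ \emph{both} balls already have at least $2^{k-1}$ points. With the roles straightened out, the remainder of your plan (truncation at $\Delta/4$ for disjointness and hence independence, Chernoff over $C\log n$ trials per scale, union bound over $\binom{n}{2}$ pairs, and the Cauchy--Schwarz step on $\sum_k(r_k-r_{k-1})^2$) goes through and yields~\eqref{eq:bourgain estimate}.
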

Bourgain proved in~\cite{Bou85} that the estimate~\eqref{eq:bourgain
estimate} is sharp up to an iterated logarithm factor, i.e., that
there exist arbitrarily large $n$-point metric spaces $\M_n$ for
which $c_2(\M_n)\gtrsim \frac{\log n}{\log \log n}$. The $\log\log n$ term was removed in the important paper~\cite{LLR95} of
Linial, London and Rabinovich, who showed that the shortest path
metric on bounded degree $n$-vertex expander graphs has Euclidean
distortion $\gtrsim \log n$.

If one is interested only in embeddings into infinite dimensional
Banach spaces, then Theorem~\ref{thm:bourgain} is stated in the
strongest possible form: as noted above, it implies that for every
infinite dimensional Banach space $\Y$, we have $c_\Y(\M)\lesssim
\log n$. Below, we will actually use Theorem~\ref{thm:bourgain} for
embeddings into $L_1$, i.e., we will use the fact that
$c_1(\M)\lesssim \log n$. The expander based lower bound of Linial,
London and Rabinovich~\cite{LLR95} extends to embeddings into $L_1$
as well, i.e., even this weaker form of Bourgain's embedding theorem
is asymptotically sharp. We refer to~\cite[Ch.\  15]{Mat02} for a
comprehensive discussion of these issues, as well as a nice
presentation of the proof of Bourgain's embedding theorem.

\section{$L_1$ as a metric space}\label{sec:L1}

Let $(\Omega,\mu)$ be a measure space. Define a mapping
$T:L_1(\Omega,\mu)\to L_\infty(\Omega\times \R,\mu\times \lambda)$,
where $\lambda$ is Lebesgue measure, by:
\begin{equation*}\label{eq:def T}
T(f)(\omega,x)\eqdef \left\{
\begin{array}{ll}1& 0< x\le f(\omega),\\
-1 & f(\omega)<x<0,\\
0& \mathrm{otherwise}.\end{array}\right.
\end{equation*}
For all $f,g\in L_1(\Omega,\mu)$ we have:
\begin{equation*}\label{eq:difference}
\Big|T(f)(\omega,x)-T(g)(\omega,x)\Big|= \left\{
\begin{array}{ll}1& g(\omega)< x\le f(\omega)\ \mathrm{or}\ f(\omega)< x\le g(\omega),\\
0& \mathrm{otherwise}.\end{array}\right.
\end{equation*}
Thus, for all $p>0$ we have,
\begin{multline}\label{eq:lp}
\left\|T(f)-T(g)\right\|_{L_p(\Omega\times \R,\mu\times \lambda)}^p
=\int_{\Omega} \left(\int_{(g(\omega),f(\omega)]\sqcup
(f(\omega),g(\omega)]} d\lambda\right)d\mu(\omega)\\=\int_\Omega
|f(\omega)-g(\omega)|d\mu(\omega)=\|f-g\|_{L_1(\Omega,\mu)}.
\end{multline}
Specializing~\eqref{eq:lp} to $p=2$, we see that:
\begin{equation*}\label{eq:NEG}
\left\|T(f)-T(g)\right\|_{L_2(\Omega\times \R,\mu\times \lambda)}=\sqrt{\|f-g\|_{L_1(\Omega,\mu)}}.
\end{equation*}
\begin{cor}\label{cor:NEG}
The metric space $\left(L_1(\Omega,\mu),\|f-g\|_{L_1(\Omega,\mu)}^{1/2}\right)$ admits an isometric embedding into Hilbert space.
\end{cor}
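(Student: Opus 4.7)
The plan is to recognize the corollary as an immediate consequence of the calculation just performed, specialized to $p=2$. That calculation established
\[
\|T(f)-T(g)\|_{L_2(\Omega\times\R,\mu\times\lambda)} = \sqrt{\|f-g\|_{L_1(\Omega,\mu)}},
\]
and since $L_2(\Omega\times\R,\mu\times\lambda)$ is a Hilbert space, this identity says precisely that the map $T$ is an isometric embedding of the metric space $\left(L_1(\Omega,\mu),\|f-g\|_{L_1(\Omega,\mu)}^{1/2}\right)$ into Hilbert space.

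The one small point I would check before concluding is that $T$ is genuinely well defined as a map into $L_2(\Omega\times\R,\mu\times\lambda)$. Taking $g\equiv 0$ in the identity above, and noting that $T(0)=0$ almost everywhere, yields $\|T(f)\|_{L_2}^2 = \|f\|_{L_1}$, so $T(f)\in L_2(\Omega\times\R,\mu\times\lambda)$ whenever $f\in L_1(\Omega,\mu)$. Having verified this, the corollary follows at once.

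There is essentially no obstacle to overcome here: the entire nontrivial content is carried by the earlier construction of $T$, which encodes $f\in L_1$ as the signed indicator of the region between the graph of $f$ and the $\omega$-axis. Once $T$ is in hand, the corollary is simply the $p=2$ extraction from the identity. I would also mention in passing that this exhibits $\|\cdot\|_{L_1}$ as a metric of negative type, placing the statement within the classical framework of Schoenberg's theorem, which would give an alternative but less explicit route to the same conclusion.
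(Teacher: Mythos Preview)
Your proposal is correct and matches the paper's approach exactly: the corollary is stated immediately after the identity $\|T(f)-T(g)\|_{L_2}=\sqrt{\|f-g\|_{L_1}}$, with no further argument given. Your added verification that $T(f)\in L_2(\Omega\times\R,\mu\times\lambda)$ (the paper only records $T$ as taking values in $L_\infty$) is a sensible clarification, and your remark on negative type and Schoenberg's theorem anticipates precisely the discussion the paper gives in the paragraph following the corollary.
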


Another useful corollary is obtained when~\eqref{eq:lp} is
specialized to the case $p=1$. Take an arbitrary finite subset
$X\subseteq L_1(\Omega,\mu)$. For every $(\omega,x)\in \Omega\times
\R$ consider the set $S(\omega,x)= \{f\in X:\ x\le
f(\omega)\}\subseteq X$. For every $S\subseteq X$ we can define a
measurable subset $E_S= \{(\omega,x)\in \Omega\times \R:\
S(\omega,x)=S\}\subseteq \Omega\times \R$. By the definition of $T$,
for every $f,g\in X$ we have
\begin{eqnarray*}\label{proof:cut}
\|f-g\|_{L_1(\Omega,\mu)}&\stackrel{\eqref{eq:lp}}{=}
&\left\|T(f)-T(g)\right\|_{L_1(\Omega\times \R,\mu\times \lambda)}\nonumber
\\\nonumber&=&\int_{\Omega\times \R} \Big|\1_{S(w,x)}(f)-\1_{S(w,x)}(g)\Big|\,
d(\mu\times \lambda)(\omega,x)\\&=&\sum_{S\subseteq X} (\mu\times \lambda)(E_S)\Big|\1_{S}(f)-\1_{S}(g)\Big|,
\end{eqnarray*}
where here, and in what follows, ${\bf 1}_S(\cdot)$ is the
characteristic function of $S$. Writing 
$\beta_S=(\mu\times \lambda)(E_S)$, we have the following important
corollary:
\begin{cor}\label{cor:cut}
Let $X\subseteq L_1(\Omega,\mu)$ be a finite subset of $L_1(\Omega,\mu)$. Then there exist nonnegative numbers $\{\beta_S\}_{S\subseteq X}\subseteq [0,\infty)$ such that for all $f,g\in X$ we have:
\begin{equation}\label{eq:superposition}
\|f-g\|_{L_1(\Omega,\mu)}=\sum_{S\subseteq X} \beta_S\Big|\1_{S}(f)-\1_{S}(g)\Big|.
\end{equation}
\end{cor}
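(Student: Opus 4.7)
The plan is to execute the construction already sketched in the paragraph preceding the corollary, identifying what remains to be checked. Everything rests on the pointwise identity
\[
\bigl|T(f)(\omega,x)-T(g)(\omega,x)\bigr| \;=\; \bigl|\1_{S(\omega,x)}(f)-\1_{S(\omega,x)}(g)\bigr|,
\]
valid for all $(\omega,x)\in \Omega\times \R$ and all $f,g\in X$. I would verify this by casework on the sign of $x$ and on the signs of $f(\omega),g(\omega)$: both sides take only the values $0$ or $1$, and both equal $1$ precisely when exactly one of the relations $x\le f(\omega)$, $x\le g(\omega)$ holds. The sole nontrivial case is $x\le 0$ with $f(\omega),g(\omega)\ge 0$: here $T(f)(\omega,x)=T(g)(\omega,x)=0$, while both indicators equal $1$, so the difference vanishes on both sides.

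With the pointwise identity in hand, I would combine it with \eqref{eq:lp} specialized to $p=1$. Since $X$ is finite, the collection $\{E_S\}_{S\subseteq X}$ is a finite measurable partition of $\Omega\times \R$ (measurability holds because each $E_S$ is a finite Boolean combination of the measurable sets $\{(\omega,x):x\le f(\omega)\}$ indexed by $f\in X$). On $E_S$, the integrand $|\1_{S(\omega,x)}(f)-\1_{S(\omega,x)}(g)|$ reduces to the constant $|\1_S(f)-\1_S(g)|$, so integrating piece by piece over the partition yields the representation \eqref{eq:superposition} with $\beta_S=(\mu\times \lambda)(E_S)$.

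The only foreseeable obstacle is that the corollary requires $\beta_S<\infty$, whereas $\beta_\emptyset$ and $\beta_X$ can well be infinite: they correspond respectively to values of $x$ larger than every $f(\omega)$ and to values of $x$ smaller than every $f(\omega)$. This is a cosmetic issue: because $\1_\emptyset$ and $\1_X$ are constant on $X$, these two terms contribute $0$ to the right-hand side of \eqref{eq:superposition} for every pair $f,g\in X$, so I may simply redefine $\beta_\emptyset=\beta_X=0$. For the remaining $S$ with $\emptyset\subsetneq S\subsetneq X$, choosing any $f\in S$ and $g\in X\setminus S$ makes $\beta_S$ appear with coefficient $1$ in the sum, whence $\beta_S\le \|f-g\|_{L_1(\Omega,\mu)}<\infty$, completing the verification.
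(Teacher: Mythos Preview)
Your proposal is correct and follows exactly the construction given in the paragraph preceding the corollary in the paper: the same map $T$, the same sets $S(\omega,x)$ and $E_S$, and the same identification $\beta_S=(\mu\times\lambda)(E_S)$. Your additional remark that $\beta_\emptyset$ and $\beta_X$ may be infinite and should be reset to $0$ is a point the paper glosses over, but otherwise the two arguments coincide.
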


A metric space $(\M,d_\M)$ is said to be of {\em negative type} if
the metric space $\left(\M,d_\M^{1/2}\right)$ admits an isometric
embedding into Hilbert space. Such metrics will play a crucial role
in the ensuing discussion. This terminology (see e.g.,
\cite{dezalaur}) is due to a classical theorem of
Schoenberg~\cite{schoenberg}, which asserts that $(\M,d_\M)$ is of
negative type if and only if for every $n\in \N$ and every
$x_1,\ldots,x_n\in X$, the matrix $(d_\M(x_i,x_j))_{i,j=1}^n$ is
negative semidefinite on the orthogonal complement of the main
diagonal in $\C^n$, i.e., for all $\zeta_1,\ldots,\zeta_n\in \C$
with $\sum_{j=1}^n\zeta_j=0$ we have $\sum_{i=1}^n\sum_{j=1}^n
\zeta_i\overline{\zeta_j}d_\M(x_i,x_j)\le 0$.
Corollary~\eqref{cor:NEG} can be restated as saying that
$L_1(\Omega,\mu)$ is a metric space of negative type.

Corollary~\eqref{cor:cut} is often called the {\em cut cone
representation of $L_1$ metrics}. To explain this terminology,
consider the set $\mathscr C\subseteq \R^{n^2}$ of all $n\times n$
real matrices $A=(a_{ij})$ such that there is a measure space
$(\Omega,\mu)$ and $f_1,\ldots,f_n\in L_1(\Omega,\mu)$ with
$a_{ij}=\|f_i-f_j\|_{L_1(\Omega,\mu)}$ for all $i,j\in
\{1,\ldots,n\}$. If $f_1,\ldots,f_n\in L_1(\Omega_1,\mu_1)$ and
$g_1,\ldots,g_n\in L_1(\Omega_2,\mu_2)$ then for all $c_1,c_2\ge 0$
and $i,j\in \{1,\ldots,n\}$ we have
$$c_1\|f_i-f_j\|_{L_1(\Omega_1,\mu_1)}+c_2\|f_i-f_j\|_{L_1(\Omega_2,\mu_2)}
=\|h_i-h_j\|_{L_1(\Omega_1\sqcup\Omega_2,\mu_1\sqcup\mu_2)},$$ where
$h_1,\ldots,h_n$ are functions defined on the disjoint union
$\Omega_1\sqcup\Omega_2$ as follows:
$h_i(\omega)=c_1f_i(\omega)\1_{\Omega_1}(\omega)+c_2g_i(\omega)\1_{\Omega_2}(\omega)$.
This observation shows that $\mathscr C$ is a cone (of dimension
$n(n-1)/2$). Identity~\eqref{eq:superposition} says that the cone
$\mathscr C$ is generated by the rays induced by cut semimetrics,
i.e., by matrices of the form $a_{ij}=|\1_S(i)-\1_S(j)|$ for some
$S\subseteq \{1,\ldots,n\}$. It is not difficult to see that these
rays are actually the extreme rays of the cone $\mathscr C$.
Carath\'eodory's theorem (for cones) says that we can choose the
coefficients $\{\beta_S\}_{S\subseteq X}$
in~\eqref{eq:superposition} so that only $n(n-1)/2$ of them are
non-zero.

\section{The Sparsest Cut Problem}

Given $n\in \N$ and two symmetric functions
$C,D:\{1,\ldots,n\}\times\{1,\ldots,n\}\to [0,\infty)$ (called
capacities and demands, respectively), and a subset $\emptyset\neq S
\subsetneq \{1,\ldots,n\}$, write
\begin{equation}\label{eq:defPhi}
\Phi(S) \eqdef \frac{\sum_{i=1}^n\sum_{j=1}^n C(i,j) \cdot |{\bf
1}_S(i) - {\bf 1}_S(j)|}
               {\sum_{i=1}^n\sum_{j=1}^n D(i,j) \cdot |{\bf 1}_S(i) - {\bf 1}_S(j)|}.
\end{equation}
 The value
\begin{equation}\label{eq:defPhi*}
\Phi^*(C,D) \eqdef \min_{\emptyset\neq S \subsetneq \{1,\ldots,n\}} \Phi(S)\end{equation}
 is
the minimum over all cuts (two-part partitions) of $\{1,\ldots,n\}$ of the
ratio between the total capacity crossing the boundary of the cut
and the total demand crossing the boundary of the cut.

Finding in polynomial time a cut for which $\Phi^*(C,D)$ is attained
up to a definite multiplicative constant is called the Sparsest Cut
problem. This problem is used as a subroutine in many approximation
 algorithms for NP-hard problems; see the survey
 articles~\cite{Shmoys95,Chawla08}, as well as~\cite{LR99,AKRR90} and
 the references in~\cite{ARV04,ALN08} for some of the vast literature on this topic. Computing
$\Phi^*(C,D)$ exactly has been long known to be NP-hard~\cite{SM90}.
More recently, it was shown in~\cite{CK07} that there exists
$\e_0>0$ such that it is NP-hard to approximate $\Phi^*(C,D)$ to
within a factor smaller than $1+\e_0$. In~\cite{KV04,CKKRS06} it was
shown that it is Unique Games hard to approximate $\Phi^*(C,D)$ to
within any constant factor (see~\cite{Khot02,Khot10} for more information
on the Unique Games Conjecture; we will return to this issue in
Section~\ref{sec:unique}).

It is customary in the literature to highlight the support of the capacities function $C$:
this allows us to introduce a particulary important special case of the Sparsest Cut Problem. Thus, a different way to formulate the above setup is via an $n$-vertex graph $G=(V,E)$,
with a positive weight (called a capacity) $C(e)$ associated to each edge
$e\in E$, and a nonnegative weight (called a demand) $D(u,v)$ associated
to each pair of vertices $u,v\in V$. The goal is to evaluate in polynomial time
(and in particular, while examining only a negligible fraction of the subsets of $V$) the quantity:
\begin{equation*}\label{eq:def Phi}
\Phi^*(C,D)=\min_{\emptyset \neq S\subsetneq V}
\frac{\sum_{uv\in E}C(uv)\left|\1_S(u)-\1_S(v)\right|}{\sum_{u,v\in V}D(u,v)\left|\1_S(u)-\1_S(v)\right|}\,  .
\end{equation*}
To get a feeling
for the meaning of $\Phi^*$, consider the case $C(e)=D(u,v)=1$ for all $e\in E$ and $u,v\in V$. This is
an important instance of the Sparsest Cut problem which is called ``Sparsest Cut with Uniform Demands".
In this case $\Phi^*$ becomes:
\begin{equation*}\label{eq:def uniform}
\Phi^*= \min_{\emptyset \neq S\subsetneq V}\frac{\#\{\mathrm{edges\ joining}\ S\ \mathrm{and}\
V\setminus S\}}{|S|\cdot |V\setminus S|}\, .
\end{equation*}
Thus, in the case of uniform demands, the Sparsest Cut problem
essentially amounts to solving efficiently the combinatorial
isoperimetric problem on $G$: determining the subset of the graph
whose ratio of edge boundary to its size is as small as possible.

In the literature it is also customary to emphasize the size of the
support of the demand function $D$, i.e., to state bounds in terms
of the number $k$ of pairs $\{i,j\}\subseteq \{1,\ldots,n\}$ for
which $D(i,j)>0$. For the sake of simplicity of exposition, we will
not adopt this convention here, and state all of our bounds in terms
of $n$ rather than the number of positive demand pairs $k$. We refer
to the relevant references for the simple modifications that are
required to obtain bounds in terms of $k$ alone.

>From now on, the  Sparsest Cut problem will be understood to be with
general capacities and demands; when discussing the special case of
uniform demands we will say so explicitly. In applications, general
capacities and demands are used to tune the notion of ``interface"
between $S$ and $V\setminus S$ to a wide variety of combinatorial
optimization problems, which is
 one of the reasons why the Sparsest Cut problem is so versatile in the field of approximation algorithms.

\subsection{Reformulation as an optimization problem over $L_1$}
Although the Sparsest Cut Problem clearly has geometric flavor as a
discrete isoperimetric problem, the following key reformulation of
it, due to~\cite{AvisDeza,LLR95}, explicitly relates it to the
geometry of $L_1$.

\begin{lemma}\label{lem:l1 enters}
Given symmetric $C,D:\{1,\ldots,n\}\times\{1,\ldots,n\}\to
[0,\infty)$, we have:
\begin{equation}\label{reformulate Phi*}
\Phi^*(C,D) = \min_{f_1,\ldots,f_n\in
L_1}\frac{\sum_{i=1}^n\sum_{j=1}^n C(i,j) \|f_i-f_j\|_1}
               {\sum_{i=1}^n\sum_{j=1}^n D(i,j)\|f_i-f_j\|_1}.
\end{equation}
\end{lemma}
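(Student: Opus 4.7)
The plan is to prove the two inequalities separately, where the easy direction supplies upper bounds on the right-hand side of \eqref{reformulate Phi*} via characteristic functions of cuts, and the harder direction uses the cut cone representation of $L_1$ metrics established in Corollary~\ref{cor:cut}.

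For the inequality $\Phi^*(C,D)\ge \text{RHS}$ I would fix any nonempty proper $S\subsetneq\{1,\ldots,n\}$ and test the right-hand side with the $n$-tuple of constant functions $f_i\eqdef \1_S(i)\in\R\subseteq L_1$. Then $\|f_i-f_j\|_1=|\1_S(i)-\1_S(j)|$, so the test ratio becomes exactly $\Phi(S)$ as defined in \eqref{eq:defPhi}. Minimizing over $S$ gives that the right-hand side of \eqref{reformulate Phi*} is at most $\Phi^*(C,D)$.

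For the opposite inequality, I would take an arbitrary $n$-tuple $f_1,\ldots,f_n\in L_1$ (it is enough to consider the case where the denominator of the test ratio is strictly positive, the other case being trivial). Applying Corollary~\ref{cor:cut} to the finite set $X=\{f_1,\ldots,f_n\}$ produces nonnegative weights $\{\beta_S\}_{S\subseteq \{1,\ldots,n\}}$ such that $\|f_i-f_j\|_1=\sum_S \beta_S\,|\1_S(i)-\1_S(j)|$. Substituting this identity into both the numerator and the denominator of the test ratio and switching the order of summation yields
\begin{equation*}
\frac{\sum_{i,j} C(i,j)\|f_i-f_j\|_1}{\sum_{i,j} D(i,j)\|f_i-f_j\|_1}
=\frac{\sum_S \beta_S \sum_{i,j} C(i,j)|\1_S(i)-\1_S(j)|}{\sum_S \beta_S \sum_{i,j} D(i,j)|\1_S(i)-\1_S(j)|}.
\end{equation*}
This is a ratio of two nonnegative superpositions indexed by the same family of coefficients, hence by the elementary mediant inequality $\frac{\sum a_S}{\sum b_S}\ge \min_S \frac{a_S}{b_S}$ (taken over those $S$ contributing positively to the denominator) it is bounded below by $\min_S \Phi(S)=\Phi^*(C,D)$. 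The trivial cuts $S=\emptyset$ and $S=\{1,\ldots,n\}$ contribute zero to both sums and so may be ignored.

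The only mild subtlety is dealing with degeneracies: a cut with $\sum_{i,j}D(i,j)|\1_S(i)-\1_S(j)|=0$ must be discarded from the mediant bound, which is legitimate because such terms also contribute zero to the numerator; and one should observe that if $\sum_{i,j}D(i,j)\|f_i-f_j\|_1=0$ then either both sides are zero or the infimum on the right-hand side is trivially $0\le \Phi^*(C,D)$. Combining the two inequalities and noting that the right-hand side is attained (by constants coming from a cut realizing $\Phi^*$) justifies writing $\min$ rather than $\inf$, completing the proof.
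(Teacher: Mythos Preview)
Your proof is correct and follows essentially the same route as the paper: test with indicator functions for one inequality, and use the cut-cone representation (Corollary~\ref{cor:cut}) plus a mediant/term-by-term bound for the other; the paper phrases the latter as $\sum_{i,j}C(i,j)|\1_S(f_i)-\1_S(f_j)|\ge \Phi^*\sum_{i,j}D(i,j)|\1_S(f_i)-\1_S(f_j)|$ for each $S$ and then sums, which is exactly your mediant inequality unwound. One small slip: cuts $S$ with $\sum_{i,j}D(i,j)|\1_S(i)-\1_S(j)|=0$ need \emph{not} contribute zero to the numerator, but since their numerator contribution is nonnegative the mediant bound still goes through; also note that Corollary~\ref{cor:cut} literally indexes the $\beta_S$ by subsets of $X=\{f_1,\ldots,f_n\}$ rather than of $\{1,\ldots,n\}$, so strictly speaking you should pull back via $S'=\{i:f_i\in S\}$ as the paper does.
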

\begin{proof}
Let $\phi$ denote the right hand side of~\eqref{reformulate Phi*},
and write $\Phi^*=\Phi^*(C,D)$. Given a subset $S\subseteq
\{1,\ldots, n\}$, by considering $f_i=\1_S(i)\in \{0,1\}\subseteq
L_1$ we see that that $\phi\le \Phi^*$. In the reverse direction, if
$X=\{f_1,\ldots,f_n\}\subseteq L_1$ then let
$\{\beta_S\}_{S\subseteq X}$ be the non-negative weights from
Corollary~\ref{cor:cut}. For $S\subseteq X$ define a subset of
$\{1,\ldots,n\}$ by $S'= \{i\in\{1,\ldots,n\}:\ f_i\in S\}$. It
follows from the definition of $\Phi^*$ that for all $S\subseteq X$
we have,
\begin{multline}\label{eq;use def phi*}
\sum_{i=1}^n\sum_{j=1}^n
C(i,j)|\1_S(f_i)-\1_{S}(f_j)|\stackrel{\eqref{eq:defPhi}}=\Phi(S')\sum_{i=1}^n\sum_{j=1}^n
D(i,j)|\1_S(f_i)-\1_{S}(f_j)|\\ \stackrel{\eqref{eq:defPhi*}}{\ge}
\Phi^*\sum_{i=1}^n\sum_{j=1}^n D(i,j)|\1_S(f_i)-\1_{S}(f_j)|.
\end{multline}
Thus
\begin{multline*}\label{eq:use cut}
\sum_{i=1}^n\sum_{j=1}^n
C(i,j)\|f_i-f_j\|_1\stackrel{\eqref{eq:superposition}}{=}\sum_{S\subseteq
X}\beta_S \sum_{i=1}^n\sum_{j=1}^n
C(i,j)|\1_S(f_i)-\1_{S}(f_j)|\\\stackrel{\eqref{eq;use def
phi*}}{\ge} \Phi^*\sum_{S\subseteq X}\beta_S\sum_{i=1}^n\sum_{j=1}^n
D(i,j)|\1_S(f_i)-\1_{S}(f_j)|\stackrel{\eqref{eq:superposition}}{=}\sum_{i=1}^n\sum_{j=1}^n
D(i,j)\|f_i-f_j\|_1.
\end{multline*}
It follows 
that $\phi\ge \Phi^*$, as
required.
\end{proof}

\subsection{The linear program}\label{sec:linear}

Lemma~\ref{lem:l1 enters} is a reformulation of the Sparsest Cut
Problems in terms of a continuous optimization problem on the space
$L_1$. Being a reformulation, it shows in particular that solving
$L_1$ optimization problems such as the right hand side
of~\eqref{reformulate Phi*} is NP-hard.

In the beautiful paper~\cite{LR99} of Leighton and Rao it was shown
that there exists a polynomial time algorithm that, given an
$n$-vertex graph $G=(V,E)$, computes a number which is guaranteed to
be within a factor of $\lesssim \log n$ of the uniform Sparsest Cut
value~\eqref{eq:def uniform}. The Leighton-Rao algorithm uses
combinatorial ideas which do not apply to Sparsest Cut with general
demands. A breakthrough result, due to
Linial-London-Rabinovich~\cite{LLR95} and Aumann-Rabani~\cite{AR98},
introduced embedding methods to this field, yielding a polynomial
time algorithm which computes $\Phi^*(C,D)$ up to a factor $\lesssim
\log n$ for all $C,D:\{1,\ldots,n\}\times\{1,\ldots,n\}\to
[0,\infty)$.

The key idea of~\cite{LLR95,AR98} is based on replacing the finite
subset $\{f_1,\ldots,f_n\}$ of $L_1$ in~\eqref{reformulate Phi*} by
an {\em arbitrary} semimetric on $\{1,\ldots,n\}$. Specifically, by
homogeneity we can always assume that the denominator
in~\eqref{reformulate Phi*} equals $1$, in which case
Lemma~\ref{lem:l1 enters} says that $\Phi^*(C,D)$ equals the minimum
of $\sum_{i=1}^n\sum_{j=1}^nC(i,j)d_{ij}$, given that
$\sum_{i=1}^n\sum_{j=1}^n D(i,j)d_{ij}=1$ and there exist
$f_1,\ldots,f_n\in L_1$ for which $d_{ij}=\|f_i-f_j\|_1$ for all
$i,j\in \{1,\ldots,n\}$. We can now ignore the fact that $d_{ij}$
was a semimetric that came from a subset of $L_1$, i.e., we can
define $M^*(C,D)$ to be the minimum of
$\sum_{i=1}^n\sum_{j=1}^nC(i,j)d_{ij}$, given that
$\sum_{i=1}^n\sum_{j=1}^n D(i,j)d_{ij}=1$, $d_{ii}=0$, $d_{ij}\ge 0$, $d_{ij}=d_{ji}$ for
all $i,j\in \{1,\ldots,n\}$ ($n(n-1)/2$ symmetry constraints) and
$d_{ij}\le d_{ik}+d_{kj}$ for all $i,j,k\in \{1,\ldots,n\}$ ($\le
n^3$ triangle inequality constraints).

Clearly $M^*(C,D)\le \Phi^*(C,D)$, since we are minimizing over all
semimetrics rather than just those arising from subsets of $L_1$.
Moreover, $M^*(C,D)$ can be computed in polynomial time up to
arbitrarily good precision~\cite{GLS93}, since it is a linear
program (minimizing a linear functional in the variables $(d_{ij})$
subject to polynomially many linear constraints).

The linear program produces a semimetric $d_{ij}^*$ on
$\{1,\ldots,n\}$ which satisfies
$M^*(C,D)=\sum_{i=1}^n\sum_{j=1}^nC(i,j)d^*_{ij}$ and
$\sum_{i=1}^n\sum_{j=1}^n D(i,j)d^*_{ij}=1$ (ignoring arbitrarily
small errors). By Lemma~\ref{lem:l1 enters} we need to somehow
relate this semimetric to $L_1$. It is at this juncture that we see
the power of Bourgain's embedding theorem~\ref{thm:bourgain}: the
constraints of the linear program only provide us the information that
$d_{ij}^*$ is a semimetric, and nothing else. So, we need to be able
to somehow handle arbitrary metric spaces---precisely what
Bourgain's theorem does, by furnishing $f_1,\ldots,f_n\in L_1$ such
that for all $i,j\in \{1,\ldots,n\}$ we have
\begin{equation}\label{eq:use bourgain}
\frac{d_{ij}^*}{\log n}\lesssim \|f_i-f_j\|_1\le d_{ij}^*.
\end{equation}
Now,
\begin{multline}\label{eq:deduce from 14}
 \Phi^*(C,D)\stackrel{\eqref{reformulate Phi*}}{\le}
\frac{\sum_{i=1}^n\sum_{j=1}^n C(i,j) \|f_i-f_j\|_1}
               {\sum_{i=1}^n\sum_{j=1}^n D(i,j)\|f_i-f_j\|_1}
              \\ \stackrel{\eqref{eq:use bourgain}}{\lesssim} \log n\cdot \frac{\sum_{i=1}^n\sum_{j=1}^n C(i,j) d_{ij}^*}
               {\sum_{i=1}^n\sum_{j=1}^n D(i,j)d_{ij}^*}=\log
               n\cdot M^*(C,D).
\end{multline}
Thus, $\frac{\Phi^*(C,D)}{\log n}\lesssim M^*(C,D)\le \Phi^*(C,D)$,
i.e., the polynomial time algorithm of computing $M^*(C,D)$ is
guranteed to produce a number which is within a factor $\lesssim
\log n$ of $\Phi^*(C,D)$.

\begin{rem}\label{rem:the cut}
In the above argument we only discussed the algorithmic task of fast
estimation of the number $\Phi^*(C,D)$, rather than the problem of
producing in polynomial time a subset $\emptyset\neq S\subsetneq
\{1,\ldots,n\}$ for which $\Phi^*(S)$ is close up to a certain
multiplicative guarantee to the optimum value $\Phi^*(C,D)$. All the
algorithms discussed in this paper produce such a set $S$, rather than
just approximating the number $\Phi^*(C,D)$. In order to modify the
argument above to this setting, one needs to go into the proof of
Bourgain's embedding theorem, which as currently stated as just an existential
result for $f_1,\ldots,f_n$ as in~\eqref{eq:use bourgain}. This
issue is addressed in~\cite{LLR95}, which provides an algorithmic
version of Bourgain's theorem. Ensuing algorithms in this paper can
be similarly modified to produce a good cut $S$, but we will ignore
this issue from now on, and continue to focus solely on algorithms
for approximate computation of $\Phi^*(C,D)$.
\end{rem}

\subsection{The semidefinite program}\label{sec:semidefinite}

We have already stated in Section~\ref{SEC:EMBED} that the
logarithmic loss in the application~\eqref{eq:use bourgain} of
Bourgain's theorem cannot be improved. Thus, in order to obtain a
polynomial time algorithm with  approximation guarantee better than
$\lesssim \log n$, we need to impose additional geometric
restrictions on the metric $d_{ij}^*$; conditions that will
hopefully yield a class of metric spaces for which one can prove an
$L_1$ distortion bound that is asymptotically smaller than the
$\lesssim \log n$ of Bourgain's embedding theorem. This is indeed
possible, based on a quadratic variant of the discussion in
Section~\ref{sec:linear}; an approach due to Goemans and
Linial~\cite{Goe97,Lin-open,Lin02}.

The idea of Goemans and Linial is based on Corollary~\ref{cor:NEG},
i.e., on the fact that the metric space $L_1$ is of negative type.
We define $M^{**}(C,D)$ to be the minimum of
$\sum_{i=1}^n\sum_{j=1}^n C(i,j)d_{ij}$, subject to the constraint
that $\sum_{i=1}^n\sum_{j=1}^n D(i,j)d_{ij}=1$ and $d_{ij}$ is a
semimetric of negative type on $\{1,\ldots,n\}$. The latter
condition can be equivalently restated as the requirement that, in
addition to $d_{ij}$ being a semimetric on $\{1,\ldots,n\}$, there
exist vectors $v_1,\ldots,v_n\in L_2$ such that
$d_{ij}=\|v_i-v_j\|_2^2$ for all $i,j\in \{1,\ldots,n\}$.
Equivalently, there exists a symmetric positive semidefinite
$n\times n$ matrix $(a_{ij})$ (the Gram matrix of $v_1,\ldots,v_n)$,
such that $d_{ij}=a_{ii}+a_{jj}-2a_{ij}$ for all $i,j\in
\{1,\ldots,n\}$.

Thus, $M^{**}(C,D)$ is the minimum of $\sum_{i=1}^n\sum_{j=1}^n
C(i,j)(a_{ii}+a_{jj}-2a_{ij})$, a linear function in the variables
$(a_{ij})$, subject to the constraint that $(a_{ij})$ is a symmetric
positive semidefinite matrix, in conjunction with the linear
constraints $\sum_{i=1}^n\sum_{j=1}^n
D(i,j)(a_{ii}+a_{jj}-2a_{ij})=1$ and  for all $i,j,k\in
\{1,\ldots,n\}$, the triangle inequality constraint
$a_{ii}+a_{jj}-2a_{ij}\le
(a_{ii}+a_{kk}-2a_{ik})+(a_{kk}+a_{jj}-2a_{kj})$. Such an
optimization problem is called a semidefinite program, and by the
methods described in~\cite{GLS93}, $M^{**}(C,D)$ can be computed
with arbitrarily good precision in polynomial time.

Corollary~\ref{cor:NEG} and Lemma~\ref{lem:l1 enters} imply that
$M^*(C,D)\le M^{**}(C,D)\le \Phi^*(C,D)$. The following breakthrough
result of Arora, Rao and Vazirani~\cite{ARV04} shows that for
Sparsest Cut with uniform demands the Goemans-Linial approach does
indeed yield an improved approximation algorithm:

\begin{theorem}[\cite{ARV04}]\label{thm:ARV} In the case of uniform
demands, i.e., if $C(i,j)\in \{0,1\}$ and $D(i,j)=1$ for all $i,j\in
\{1,\ldots,n\}$, we have
\begin{equation}\label{eq:ARV}
\frac{\Phi^*(C,D)}{\sqrt{\log n}}\lesssim M^{**}(C,D)\le
\Phi^*(C,D).
\end{equation}
\end{theorem}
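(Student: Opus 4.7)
The right-hand inequality $M^{**}(C,D)\le\Phi^*(C,D)$ is immediate: any cut $S$ produces a one-dimensional negative-type feasible point via $v_i=\1_S(i)\in\R$, whose SDP objective equals the sparsest cut cost of $S$, so $M^{**}\le M^*\le\Phi^*$. The substance is the integrality-gap bound in~\eqref{eq:ARV}. The plan is standard SDP rounding: take a near-optimal solution, realized as vectors $v_1,\dots,v_n\in\ell_2$ with $\|v_i-v_j\|_2^2=d_{ij}$ satisfying the triangle inequality, and round it to a genuine cut of uniform sparsity $\lesssim\sqrt{\log n}\cdot M^{**}(C,D)$.

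After rescaling so that $\sum_{i,j}\|v_i-v_j\|_2^2=n^2$, the SDP cost becomes $n^2M^{**}(C,D)$. A pigeonhole step (subtracting the centroid and truncating to a large level set) then isolates a ``spread'' subset $T\subseteq\{1,\dots,n\}$ of linear size on which a positive fraction of pairs satisfy $\|v_i-v_j\|_2\ge c_0$ for a universal $c_0>0$.

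The heart of the argument is the Arora--Rao--Vazirani \emph{structure theorem}: any such spread configuration admits subsets $A,B\subseteq T$ of size $\gtrsim n$ and a scale $\Delta\gtrsim 1/\sqrt{\log n}$ with $\|v_a-v_b\|_2\ge\Delta$ for every $a\in A,\ b\in B$. I would prove this by projecting the $v_i$'s onto a standard Gaussian direction $g\in\ell_2$ and forming the tail sets
\begin{equation*}
A_0=\{i\in T:\langle v_i,g\rangle\ge\sigma\},\qquad B_0=\{i\in T:\langle v_i,g\rangle\le-\sigma\},
\end{equation*}
for a small constant $\sigma>0$; spreadness guarantees $|A_0|,|B_0|\gtrsim n$ with positive probability. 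Suppose, toward a contradiction, that for every sublinear purge of $A_0\cup B_0$ one still leaves a large matching $M\subseteq A_0\times B_0$ of ``short'' pairs $(a,b)$ with $\|v_a-v_b\|_2<\Delta$. One then composes such matchings $k\asymp\sqrt{\log n}$ times---using fresh auxiliary randomness at each step but the \emph{same} $g$ throughout---to execute a random walk along short matched pairs that produces two indices $i,j$ with
\begin{equation*}
\|v_i-v_j\|_2\,=\,O(1),\qquad |\langle v_i-v_j,\,g\rangle|\,\gtrsim\,\sqrt{\log n},
\end{equation*}
forcing the unit vector $(v_i-v_j)/\|v_i-v_j\|_2$ to have $g$-projection of order $\sqrt{\log n}$. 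A union bound over the $\le n^2$ resulting unit vectors against the standard Gaussian tail $\Pr[|\langle u,g\rangle|>t]\le e^{-t^2/2}$ then yields the required contradiction. The delicate combinatorial and measure-theoretic bookkeeping that makes this random-walk chaining work against a single fixed $g$ is the main technical obstacle of the proof, and is the precise source of the square-root logarithm in the theorem.

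Once $A$ and $B$ are in hand, producing the cut is routine. The function $\rho(i)=\min_{a\in A}\|v_i-v_a\|_2$ is $1$-Lipschitz in $\ell_2$, so the level-set cuts $S_r=\{i:\rho(i)\le r\}$ for $r$ uniform in $[0,\Delta]$ satisfy $A\subseteq S_r$ and $B\subseteq\{1,\dots,n\}\setminus S_r$, giving denominator $|S_r|\cdot|\{1,\dots,n\}\setminus S_r|\gtrsim n^2$ deterministically. The expected cut capacity is at most $(1/\Delta)\sum_{uv\in E}C(uv)\|v_u-v_v\|_2$, which a Cauchy--Schwarz step---using the spreadness-normalized SDP cost $\sum_{uv\in E}C(uv)\|v_u-v_v\|_2^2=n^2M^{**}(C,D)$---bounds by $\sqrt{\log n}\cdot n^2 M^{**}(C,D)$. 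Dividing yields a cut of uniform sparsity $\lesssim\sqrt{\log n}\cdot M^{**}(C,D)$, completing the proof.
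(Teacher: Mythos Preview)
Your outline follows the original Arora--Rao--Vazirani strategy (structure theorem via Gaussian projection and chaining, then level-set rounding), whereas the paper deduces Theorem~\ref{thm:ARV} from the Edge Replacement Theorem of~\cite{NRS05} (Theorem~\ref{thm:replace}), combined with Lemma~\ref{lem:average distance}, the $L_1$ Cheeger inequality~\eqref{eq:cheeger}, and the isometric embedding $L_2\hookrightarrow L_1$. The two routes are related---Theorem~\ref{thm:replace} is itself proved using ARV-style ideas---but the paper's packaging is different and contains no Cauchy--Schwarz step.

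There is, however, a genuine gap in your rounding step: you work with the Euclidean distance $\|v_i-v_j\|_2$ rather than the negative-type metric $d(i,j)=\|v_i-v_j\|_2^2$. Your $\rho(i)=\min_{a\in A}\|v_i-v_a\|_2$ is $1$-Lipschitz in $\ell_2$, so the expected cut numerator is controlled by $\sum_{uv\in E}C(uv)\|v_u-v_v\|_2$, while the SDP value controls $\sum_{uv\in E}C(uv)\|v_u-v_v\|_2^2$. Cauchy--Schwarz gives only $\sum_{uv}C(uv)\|v_u-v_v\|_2\le|E|^{1/2}\big(n^2M^{**}\big)^{1/2}$, and there is no reason for $|E|\lesssim n^2M^{**}$; the claimed bound $\lesssim\sqrt{\log n}\cdot n^2M^{**}$ does not follow. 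The fix is to round with $\rho(i)=d(i,A)=\min_{a\in A}\|v_i-v_a\|_2^2$, which is $1$-Lipschitz in $d$ \emph{precisely because of the SDP triangle-inequality constraints}; then the expected numerator is directly $(1/\Delta)\sum_{uv}C(uv)d(u,v)=(1/\Delta)\cdot n^2M^{**}$, with no Cauchy--Schwarz needed. But this requires the structure theorem to deliver $d$-separation $d(a,b)\ge\Delta\gtrsim 1/\sqrt{\log n}$, i.e., $\|v_a-v_b\|_2\gtrsim(\log n)^{-1/4}$, which is strictly stronger than the $\ell_2$-separation you stated. Obtaining that stronger separation is exactly where the negative-type triangle inequality enters the chaining: $k$ steps each of $d$-length $\le\Delta$ have total $d$-length $\le k\Delta$ and hence $\ell_2$-length $\le\sqrt{k\Delta}$, a $\sqrt{k}$ saving over the naive $\ell_2$ bound you used to reach $\|v_i-v_j\|_2=O(1)$. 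As written, your sketch never actually invokes the negative-type constraint, which is the whole point of the Goemans--Linial relaxation.
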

In the case of general demands we have almost the same result, up to
lower order factors:
\begin{theorem}[\cite{ALN08}]\label{thm:aln}
For all symmetric $C,D:\{1,\ldots,n\}\times \{1,\ldots,n\}\to
[0,\infty)$ we have
\begin{equation}\label{eq:aln}
\frac{\Phi^*(C,D)}{(\log n)^{\frac12+o(1)}}\lesssim M^{**}(C,D)\le
\Phi^*(C,D).
\end{equation}
\end{theorem}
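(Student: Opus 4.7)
My plan is to reduce Theorem~\ref{thm:aln} to the following embedding statement, which is the main technical content: \emph{every $n$-point metric space of negative type admits an embedding into $L_1$ of distortion at most $(\log n)^{1/2+o(1)}$}. Granted this, the argument follows the template of Section~\ref{sec:linear} verbatim. The upper bound $M^{**}(C,D)\le \Phi^*(C,D)$ is immediate, since if $\emptyset\neq S\subsetneq \{1,\ldots,n\}$ realizes $\Phi^*(C,D)$ then $f_i=\1_S(i)\in L_1$ gives the cut semimetric $d_{ij}=\|f_i-f_j\|_1$, and this is of negative type by Corollary~\ref{cor:NEG}, hence feasible for the semidefinite program. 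For the lower bound, compute an almost-optimal negative-type solution $d^*_{ij}$ of the SDP, feed it into the embedding theorem to obtain $f_1,\dots,f_n\in L_1$ with $d^*_{ij}/(\log n)^{1/2+o(1)}\lesssim \|f_i-f_j\|_1\le d^*_{ij}$, and then plug these into Lemma~\ref{lem:l1 enters} exactly as in the chain of inequalities~\eqref{eq:deduce from 14}.

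The embedding itself will be a Fr\'echet-type map $f(x)=(d(x,S_\alpha))_\alpha$, obtained via a measured-descent construction. Indexing a dyadic sequence of scales $\Delta_k=2^k$, at each scale $k$ I would sample a random family of subsets $\{S_{\alpha,k}\}$ from padded Lipschitz random partitions of $(X,d)$ at radius $\Delta_k$, and weight the coordinates so that (i) the Lipschitz upper bound $\|f(x)-f(y)\|_1\lesssim d(x,y)$ holds automatically because each coordinate $x\mapsto d(x,S_{\alpha,k})$ is $1$-Lipschitz, and (ii) for every pair $x,y\in X$ with $d(x,y)\approx \Delta_k$, the expected contribution from scale $k$ to $\|f(x)-f(y)\|_1$ is at least of order $\Delta_k/\sqrt{\log n}$. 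Summing over the $O(\log n)$ active scales and absorbing the $(\log\log n)^{O(1)}$ overhead generated by chaining then yields the overall $(\log n)^{1/2+o(1)}$ distortion.

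The decisive analytic input is a non-uniform strengthening of the Arora--Rao--Vazirani structure theorem: for any $n$-point negative-type metric $(X,d)$, any probability measure $\mu$ on $X$, and any two subsets $A,B\subseteq X$ of comparable $\mu$-mass whose pairwise distances average at least $\Delta$, there exist $A'\subseteq A$ and $B'\subseteq B$ of comparable $\mu$-mass satisfying $d(A',B')\gtrsim \Delta/\sqrt{\log n}$. Its proof uses the negative-type vectors $v_i\in L_2$ with $\|v_i-v_j\|_2^2=d_{ij}$, projects them onto a random Gaussian direction, and combines Gaussian concentration with a chaining argument to convert the projection estimate into the required combinatorial separation. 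These separated cores are exactly what drives the $\sqrt{\log n}$ gain in step (ii) above.

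The hard part is precisely this non-uniform separation lemma. In the uniform-demand case of Theorem~\ref{thm:ARV} one can exploit the global symmetry of a single random hyperplane cut of the SDP vectors, but for general demands one must instead handle an arbitrary pair of fat subsets of an arbitrary negative-type metric and produce large well-separated cores in both. Iterating this across all scales while correctly tracking the measure produced by the padded decompositions is what forces the $(\log\log n)^{O(1)}$ overhead relative to the $\sqrt{\log n}$ bound of the uniform case, and any improvement toward $\sqrt{\log n}$ in Theorem~\ref{thm:aln} would require a sharper non-uniform analogue of the ARV lemma.
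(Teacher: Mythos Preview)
Your overall architecture is exactly the paper's: reduce Theorem~\ref{thm:aln} via the duality of Lemma~\ref{duality} to an embedding theorem for negative-type metrics, and prove the latter by combining an ARV-type structural result with a measured-descent gluing. Two points of comparison are worth recording. First, the paper's embedding (Theorem~\ref{thm:embed ALN}) targets $L_2$, not merely $L_1$; this is strictly stronger and yields Corollary~\ref{cor:L1} as a bonus, though for Sparsest Cut either target suffices. Second, the paper packages the ARV output not as ``separated cores inside padded partitions'' but as \emph{random zero sets} (Definition~\ref{def:zero}, Theorem~\ref{thm:arv zero}): the bound $\zeta(\M;p)\lesssim\sqrt{\log n}$, obtained from ARV together with Lee's refinement and the Chawla--Gupta--R\"acke reweighting, directly produces the single-scale maps $\varphi_{S,\Delta}$ of~\eqref{eq:single}, which are then glued by the enhanced measured-descent statement Theorem~\ref{thm:new gluing}. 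Your invocation of ``padded Lipschitz random partitions'' as the source of the sets $S_{\alpha,k}$ is slightly off the mark --- padded decompositions for general (or negative-type) metrics do not by themselves deliver padding parameter $\sqrt{\log n}$; the $\sqrt{\log n}$ gain enters only through the ARV separation, which is precisely what the random-zero-set formalism isolates. Once that is corrected, your sketch and the paper's proof coincide.
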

The $o(1)$ term in~\eqref{eq:aln} is $\lesssim \frac{\log\log\log
n}{\log\log n}$. We conjecture that it could be removed altogether,
though at present it seems to be an inherent artifact of
complications in the proof in~\cite{ALN08}.

Before explaining some of the ideas behind the proofs of
Theorem~\ref{thm:ARV} and Theorem~\ref{thm:aln} (the full details
are quite lengthy and are beyond the scope of this survey), we
prove, following~\cite[Prop.\ 15.5.2]{Mat02}, a crucial identity
(attributed in~\cite{Mat02} to Y. Rabinovich) which reformulates
these results in terms of an $L_1$ embeddability problem.

\begin{lemma}\label{duality}
We have
\begin{multline}\label{eq;duality}
\sup\left\{\frac{\Phi^*(C,D)}{M^{**}(C,D)}:\
C,D:\{1,\ldots,n\}\times \{1,\ldots,n\}\to (0,\infty)\right\}\\=
\sup\Big\{c_1\big(\{1,\ldots,n\},d\big):\ d\ \mathrm{is\  a\ metric\
of\  negative\  type}\Big\}.
\end{multline}
\end{lemma}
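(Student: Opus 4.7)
The plan is to prove both inequalities in the displayed identity. Write $A_n$ for the left-hand supremum (the worst-case ``integrality gap'' of the Goemans--Linial SDP) and $B_n$ for the right-hand supremum. The inequality $A_n \le B_n$ will follow directly from the SDP optimal solution together with Lemma~\ref{lem:l1 enters}, while $A_n \ge B_n$ will come from a Farkas-style duality applied to the cut-cone representation of $L_1$ metrics (Corollary~\ref{cor:cut}), which converts the obstruction to embedding a negative-type metric into a pair $(C,D)$ of capacities and demands.

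For $A_n\le B_n$ I would fix $C,D$ and let $d^*$ be a negative-type semimetric attaining $M^{**}(C,D)$. After identifying points at zero distance (and, if needed, an arbitrarily small perturbation), $d^*$ becomes a genuine negative-type metric on at most $n$ points, so by definition of $B_n$ there exist $f_1,\ldots,f_n\in L_1$ and $s>0$ with $s d^*_{ij}\le \|f_i-f_j\|_1\le (B_n+\varepsilon)s d^*_{ij}$. Substituting into Lemma~\ref{lem:l1 enters} yields
$$
\Phi^*(C,D)\le \frac{\sum_{i,j} C(i,j)\|f_i-f_j\|_1}{\sum_{i,j} D(i,j)\|f_i-f_j\|_1}\le (B_n+\varepsilon)\cdot\frac{\sum_{i,j} C(i,j)d^*_{ij}}{\sum_{i,j} D(i,j)d^*_{ij}}=(B_n+\varepsilon)M^{**}(C,D),
$$
and letting $\varepsilon\to 0$ gives $\Phi^*(C,D)/M^{**}(C,D)\le B_n$.

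For the reverse direction I would fix a negative-type metric $d$ on $\{1,\ldots,n\}$, set $\alpha=c_1(\{1,\ldots,n\},d)$, and observe via Corollary~\ref{cor:cut} that ``$c_1(d)\le \tilde\alpha$'' is equivalent to feasibility of the linear program in variables $\beta_S\ge 0$ ($S\subseteq\{1,\ldots,n\}$) defined by the constraints $d_{ij}\le \sum_S \beta_S|\1_S(i)-\1_S(j)|\le \tilde\alpha\, d_{ij}$ for all $i,j$. For $\tilde\alpha<\alpha$ this LP is infeasible, so Farkas' lemma produces dual multipliers $C(i,j),D(i,j)\ge 0$ satisfying
$$
\sum_{i,j}C(i,j)|\1_S(i)-\1_S(j)|\ge \sum_{i,j}D(i,j)|\1_S(i)-\1_S(j)|\quad\text{for every } S\subseteq\{1,\ldots,n\},
$$
together with the strict inequality $\tilde\alpha\sum_{i,j}C(i,j)d_{ij}<\sum_{i,j}D(i,j)d_{ij}$. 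The first condition gives $\Phi(S)\ge 1$ for every nontrivial cut, hence $\Phi^*(C,D)\ge 1$; the second, combined with the fact that $d$ itself is a negative-type semimetric feasible for the SDP, gives $M^{**}(C,D)\le \sum C d/\sum Dd<1/\tilde\alpha$. Consequently $\Phi^*(C,D)/M^{**}(C,D)>\tilde\alpha$. A uniform perturbation $C\mapsto C+\eta,\ D\mapsto D+\eta$ with small $\eta>0$ leaves the first inequality unchanged (the $\eta$ contributions cancel on both sides) and, for $\eta$ small enough, preserves the second strict inequality while placing $(C,D)$ in the admissible class $(0,\infty)^{n\times n}$. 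Letting $\tilde\alpha\nearrow\alpha$ and then taking sup over $d$ yields $A_n\ge B_n$.

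The substantive step is really the Farkas argument in the reverse direction; the remaining issues (genuine metric vs.\ semimetric, strict positivity of $C,D$, non-vanishing of denominators) are handled by routine continuity/perturbation arguments, so I do not expect a hidden obstacle beyond carefully setting up the cut-cone LP so that its dual has the precise form of a Sparsest Cut numerator and denominator.
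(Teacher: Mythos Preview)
Your proposal is correct and follows essentially the same approach as the paper. For the easy direction both of you embed the SDP-optimal negative-type metric into $L_1$ and invoke Lemma~\ref{lem:l1 enters}; for the hard direction both of you run a convex-separation argument---the paper separates the cut cone $\mathscr{C}\subseteq\R^{n^2}$ from the ``distortion box'' $\mathscr{K}_\e=\{(a_{ij}): sd^*_{ij}\le a_{ij}\le (c-\e)sd^*_{ij}\}$ directly in matrix space and splits the separating functional $(h_{ij}^\e)$ into its positive and negative parts to define $C^\e,D^\e$, whereas you phrase the same separation as Farkas' lemma applied to the cut-decomposition LP in the variables $\beta_S$, obtaining the Farkas multipliers as $C,D$---but the two arguments are dual reformulations of one another and produce the same $(C,D)$.
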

\begin{proof} The proof of the fact that the left hand side
of~\eqref{eq;duality} is at most the right hand side
of~\eqref{eq;duality} is identical to the way~\eqref{eq:deduce from
14} was deduced from~\eqref{eq:use bourgain}.

In the reverse direction, let $d^*$ be a metric of negative type on
$\{1,\ldots,n\}$ for which $c_1(\{1,\ldots,n\},d^*)\eqdef c$ is
maximal among all such metrics. Let $\mathscr
C\subseteq \R^{n^2}$ be the cone in the space of $n\times n$
symmetric matrices from the last paragraph of Section~\ref{sec:L1},
i.e., $\mathscr C$ consists of all matrices of the form
$(\|f_i-f_j\|_1)$ for some $f_1,\ldots,f_n\in L_1$.

Fix $\e\in (0,c-1)$ and let $\mathscr K_\e\subseteq \R^{n^2}$ be the
set of all symmetric matrices $(a_{ij})$ for which there exists
$s>0$ such that $sd^*(i,j)\le a_{ij}\le (c-\e)sd^*(i,j)$ for all
$i,j\in \{1,\ldots,n\}$. By the definition of $c$, the convex sets
$\mathscr C$ and $\mathscr K_\e$ are disjoint, since otherwise $d^*$
would admit an embedding into $L_1$ with distortion $c-\e$. It
follows that there exists a symmetric matrix $(h_{ij}^\e)\in
\R^{n^2}\setminus\{0\}$ and $\alpha\in \R$, such that
$\sum_{i=1}^n\sum_{j=1}^n h_{ij}^\e a_{ij} \le \alpha$ for all
$(a_{ij})\in \mathscr K_\e$, and $\sum_{i=1}^n\sum_{j=1}^n h_{ij}^\e
b_{ij} \ge \alpha$ for all $(b_{ij})\in \mathscr C$. Since both
$\mathscr C$ and $\mathscr K_\e$ are closed under multiplication by
positive scalars, necessarily $\alpha=0$.

Define $C^\e(i,j)\eqdef h_{ij}^\e\1_{\{h_{ij}^\e\ge 0\}}$ and
$D^\e(i,j)\eqdef|h_{ij}^\e|\1_{\{h_{ij}^\e\le 0\}}$. By definition of
$M^{**}(C^\e,D^\e)$,
\begin{equation}\label{eq:use separation1}
\sum_{i=1}^n\sum_{j=1}^n C^\e(i,j)d^*_{ij}\ge
M^{**}(C^\e,D^\e)\cdot\sum_{i=1}^n\sum_{j=1}^n D^\e(i,j)d^*_{ij}.
\end{equation}
By considering $a_{ij}\eqdef\left((c-\e)\1_{\{h_{ij}^\e\ge
0\}}+\1_{\{h_{ij}^\e< 0\}}\right)d^*(i,j)\in \mathscr K_\e$, the
inequality $\sum_{i=1}^n\sum_{j=1}^n h_{ij}^\e a_{ij} \le 0$
becomes:
\begin{equation}\label{eq:use separation2}
\sum_{i=1}^n\sum_{j=1}^n D^\e(i,j)d^*_{ij}\ge
(c-\e)\sum_{i=1}^n\sum_{j=1}^n C^\e(i,j)d^*_{ij}.
\end{equation}
A combination of~\eqref{eq:use separation1} and~\eqref{eq:use
separation2} implies that $(c-\e)M^{**}(C^\e,D^\e)\le 1$. At the
same time, for all $f_1,\ldots,f_n\in L_1$, the inequality
$\sum_{i=1}^n\sum_{j=1}^n h_{ij}^\e\|f_i-f_j\|_1\ge 0$ is the same
as $\sum_{i=1}^n\sum_{j=1}^n C^\e(i,j)\|f_i-f_j\|_1\ge
\sum_{i=1}^n\sum_{j=1}^n D^\e(i,j)\|f_i-f_j\|_1$, which by
Lemma~\ref{reformulate Phi*} means that $\Phi^*(C^\e,D^\e)\ge 1$.
Thus $\Phi^*(C^\e,D^\e)/M^{**}(C^\e,D^\e)\ge c-\e$, and since this
holds for all $\e\in (0,c-1)$, the proof of Lemma~\ref{duality} is
complete.
\end{proof}

In the case of Sparsest Cut with uniform demands, we have the
following result which is analogous to Lemma~\ref{duality}, where the
$L_1$ bi-Lipschitz distortion is replaced by the smallest possible
factor by which $1$-Lipschitz functions into $L_1$ can distort the
{\em average distance}. The proof is a slight variant of the proof
of Lemma~\ref{duality}; the simple details are left to the reader.
This connection between Sparsest Cut with uniform demands and
embeddings that preserve the average distance is due to
Rabinovich~\cite{Rab08}.

\begin{lemma}\label{lem:average distance}
The supremum of $\Phi^*(C,D)/M^{**}(C,D)$ over all instances of
uniform demands, i.e., when $C(i,j)\in \{0,1\}$ and $D(i,j)=1$ for
all $i,j\in \{1,\ldots,n\}$, equals the infimum over $A>0$ such that
for all metrics $d$ on $\{1,\ldots,n\}$ of negative type, there
exist $f_1,\ldots,f_n\in L_1$ satisfying $\|f_i-f_j\|_1\le d(i,j)$
for all $i,j\in \{1,\ldots,n\}$ and
$A\sum_{i=1}^n\sum_{j=1}^n\|f_i-f_j\|_1\ge
\sum_{i=1}^n\sum_{j=1}^nd(i,j)$.
\end{lemma}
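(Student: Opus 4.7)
The plan is to mimic the proof of Lemma~\ref{duality}, modified in two ways: the demand is fixed to be identically $1$, and the role of $L_1$ bi-Lipschitz distortion is replaced by the average-distance quantity in the statement. Write $L$ for the supremum on the left and $R$ for the infimum on the right. For $L\le R$, fix $A$ satisfying the hypothesis and a uniform demand instance; let $d^*$ attain the corresponding $M^{**}$, normalized so $\sum_{i,j} d^*_{ij}=1$ (so that $M^{**}=\sum_{i,j} C(i,j)d^*_{ij}$). Applying the hypothesis to $d^*$ produces $f_1,\ldots,f_n\in L_1$ with $\|f_i-f_j\|_1\le d^*_{ij}$ and $\sum_{i,j}\|f_i-f_j\|_1\ge 1/A$; Lemma~\ref{lem:l1 enters} then gives
$$
\Phi^*\le \frac{\sum_{i,j} C(i,j)\|f_i-f_j\|_1}{\sum_{i,j}\|f_i-f_j\|_1}\le A\sum_{i,j} C(i,j)d^*_{ij}=A\cdot M^{**},
$$
so that $\Phi^*/M^{**}\le A$, and hence $L\le R$.

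For $R\le L$, fix an NT metric $d^*$ on $\{1,\ldots,n\}$ and $\e>0$. I seek $f_1,\ldots,f_n\in L_1$ with $\|f_i-f_j\|_1\le d^*_{ij}$ and $(L+\e)\sum\|f_i-f_j\|_1\ge\sum d^*_{ij}$---equivalently, $\mathscr{C}\cap\mathscr{K}_\e\neq\emptyset$, where $\mathscr{C}\subseteq\R^{n^2}$ is the cut cone of Corollary~\ref{cor:cut} and
$$
\mathscr{K}_\e=\bigl\{(a_{ij}):\ a_{ij}\le d^*_{ij},\ (L+\e)\sum_{i,j} a_{ij}\ge\sum_{i,j} d^*_{ij}\bigr\}.
$$
If the intersection is empty, LP duality applied to the cut-cone expansion (equivalently, the separation-plus-cone argument of Lemma~\ref{duality}) furnishes weights $y^*_{ij}\ge 0$ with $\sum_{i,j} y^*_{ij}d^*_{ij}\le \frac{1}{L+\e}\sum_{i,j} d^*_{ij}$ and $\sum_{i,j} y^*_{ij}|\1_S(i)-\1_S(j)|\ge 2|S|(n-|S|)$ for every $\emptyset\neq S\subsetneq\{1,\ldots,n\}$. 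Taking $C^\e=y^*$ and $D^\e\equiv 1$, the first bound gives $M^{**}(C^\e,D^\e)\le 1/(L+\e)$ and the second gives $\Phi^*(C^\e,D^\e)\ge 1$, so $\Phi^*/M^{**}\ge L+\e$, contradicting the definition of $L$.

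The main technical residue is that this duality furnishes a nonnegative capacity $C^\e=y^*$, not the $\{0,1\}$-valued capacity literally required by ``uniform demand''---this is precisely the ``simple detail'' the statement defers to the reader. It amounts to showing that, with $D\equiv 1$, the supremum of $\Phi^*/M^{**}$ is unchanged upon restricting to $C\in\{0,1\}$, and is handled by a standard rationalization/scaling plus vertex-duplication maneuver (or by noting that extreme points of the relevant dual polytope are integral after scaling). Aside from this bookkeeping, the proof is identical in spirit and structure to that of Lemma~\ref{duality}.
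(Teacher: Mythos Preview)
The paper does not actually prove this lemma; it simply says ``the proof is a slight variant of the proof of Lemma~\ref{duality}; the simple details are left to the reader.'' Your proposal is precisely that slight variant, so on the level of overall strategy you are doing exactly what the paper intends. The direction $L\le R$ is carried out correctly. For $R\le L$, your LP-duality formulation is the right move and is in fact cleaner than transplanting the hyperplane-separation argument of Lemma~\ref{duality} verbatim, since your $\mathscr K_\e$ is a polytope rather than a cone (so one cannot immediately conclude $\alpha=0$ from a separating functional). The dual you write down is correct, and with it the contradiction $\Phi^*(C^\e,1)/M^{**}(C^\e,1)>L+\e$ follows.

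The one point that is not tied off is exactly the one you flag: the dual optimum $y^*$ gives a nonnegative capacity, not a $\{0,1\}$-valued one, and the lemma as stated fixes $n$. Your two suggested fixes do not obviously work. Vertex duplication changes $n$, so it does not yield an instance on $\{1,\ldots,n\}$. The claim that extreme points of the dual polytope $\{y\ge 0:\ \sum_{i,j} y_{ij}|\1_S(i)-\1_S(j)|\ge 2|S|(n-|S|)\ \forall S\}$ are $\{0,1\}$-valued after scaling is unsubstantiated; in fact the singleton constraints $S=\{i\}$ already force any $\{0,1\}$-valued feasible $y$ to be identically $1$, which gives ratio $1$ and no contradiction. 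So the passage from general $C\ge 0$ to $C\in\{0,1\}$, for \emph{fixed} $n$, is not the triviality you suggest. Note, however, that the only direction used later in the paper (in the proof of Theorem~\ref{thm:ARV}) is the easy one $L\le R$, which your argument establishes without this issue.
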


\subsubsection{$L_2$ embeddings of negative type metrics} The proof of Theorem~\ref{thm:ARV} in~\cite{ARV04} is based
on a clever geometric partitioning procedure for metrics of negative
type. Building heavily on ideas of~\cite{ARV04}, in conjunction with
some substantial additional combinatorial arguments, an alternative
approach to Theorem~\ref{thm:ARV} was obtained in~\cite{NRS05},
based on a purely graph theoretical statement which is of
independent interest. We shall now sketch this approach, since it is
modular and general, and as such it is useful for additional
geometric corollaries. 
We refer to~\cite{NRS05} for more information on these
additional applications, as well as to~\cite{ARV04} for the original
proof of Theorem~\ref{thm:ARV}.

Let $G=(V,E)$ be an $n$-vertex graph. The {\em vertex expansion} of $G$, denoted $h(G)$, is the largest $h\ge 0$ such that every $S\subseteq V$ with $|S|\le n/2$ has at least $h|S|$ neighbors in $V\setminus S$. The {\em edge expansion} of $G$, denoted $\alpha(G)$, is the largest  $\alpha\ge 0$ such that for every $S\subseteq V$ with $|S|\le n/2$, the number of edges joining $S$ and $V\setminus S$ is at least $\alpha|S|\cdot\frac{|E|}{n}$. The main combinatorial statement of~\cite{NRS05} relates these two notions of expansion of graphs:

\begin{theorem}[Edge Replacement Theorem~\cite{NRS05}]\label{thm:replace}
For every graph $G=(V,E)$ with $h(G)\ge \frac12$ there is a set of
edges $E'$ on $V$ with $\alpha(V,E')\gtrsim 1$, and such that for
every $uv\in E'$ we have $d_G(u,v)\lesssim \sqrt{\log |V|}$. Here
$d_G$ is the shortest path metric on $G$ (with respect to the
original edge set $E$), and all implicit constants are universal.
\end{theorem}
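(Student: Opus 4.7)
\noindent My plan is to reformulate the conclusion via LP duality as a fractional-matching problem, and then construct the matching using the negative-type SDP machinery underlying Theorem~\ref{thm:ARV}.

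\emph{Duality reformulation.} Constructing an edge set $E'$ on $V$ with $\alpha(V,E')\gtrsim 1$ and with every edge of $d_G$-length $\lesssim\sqrt{\log|V|}$ is equivalent, by LP duality (equivalently, by a multicommodity max-flow/min-cut argument), to producing a nonnegative weight function $w:V\times V\to[0,\infty)$ that is supported on pairs $(u,v)$ with $d_G(u,v)\lesssim\sqrt{\log|V|}$, has total mass $O(|V|)$, and assigns weight $\gtrsim|S|$ to pairs crossing the cut $(S,V\setminus S)$ for every $S\subseteq V$ with $|S|\le|V|/2$. A standard random-sampling sparsification of such a $w$ (keeping each pair $(u,v)$ with probability proportional to $w(u,v)$) then yields $E'$ with $O(|V|)$ edges and the required constant edge expansion.

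\emph{SDP construction of $w$.} Since $h(G)\ge\frac{1}{2}$ implies edge expansion $\alpha(G)\gtrsim 1$, Theorem~\ref{thm:ARV} ensures the negative-type SDP optimum for uniform Sparsest Cut on $G$ is $\gtrsim 1/\sqrt{\log|V|}$. Let $(v_i)_{i\in V}\subseteq L_2$ realize this optimum, and set $d^*(i,j)=\|v_i-v_j\|_2^2$. The core geometric input is the ARV \emph{structure theorem}: any negative-type metric of constant average diameter on $|V|$ points contains two subsets $A,B\subseteq V$ with $|A|,|B|\gtrsim|V|$ at mutual $d^*$-distance $\gtrsim 1/\sqrt{\log|V|}$. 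Iteratively peeling off such pairs $(A_t,B_t)$ and matching within them produces a family of matchings which, when aggregated, crosses every cut $S$ with total weight $\gtrsim|S|$.

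\emph{Main obstacle.} The delicate step is ensuring that each matched pair is $d_G$-short, since the ARV matching only guarantees $d^*$-separation, not $d_G$-closeness. The remedy is a region-growing argument at scale $\sqrt{\log|V|}$: restrict the ARV extraction to balls $B_G(v,\sqrt{\log|V|})$, apply it locally, and combine the local matchings through a cover of $V$ by such balls. Balancing length, expansion, and total mass \emph{simultaneously} across scales is the core technical difficulty, because the three requirements pull against one another---short edges can only cross small boundaries, while a constant edge-expansion demands substantial boundary per cut. The $\sqrt{\log|V|}$ scale is inherited directly from Theorem~\ref{thm:ARV}; no argument based only on Bourgain's embedding (Theorem~\ref{thm:bourgain}) can break the $\log|V|$ barrier here.
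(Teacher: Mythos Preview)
The paper does not itself prove Theorem~\ref{thm:replace}; it is quoted from~\cite{NRS05} with only the remark that the proof combines ``a combinatorial construction'' with ``a purely Hilbertian geometric argument based on, and simpler than, the original algorithm of~\cite{ARV04}.'' So there is little to compare against line by line, but your outline has real gaps.

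First, the implication ``$h(G)\ge\tfrac12$ implies $\alpha(G)\gtrsim 1$'' is false under the paper's normalization of edge expansion, in which the edge boundary of $S$ is compared to $|S|\cdot|E|/n$. Two disjoint $n$-cliques joined by a perfect matching have $h\ge 1$, yet taking $S$ to be one of the cliques gives only $n$ boundary edges while $|E|/|V|\asymp n$, so $\alpha\lesssim 1/n$. This already breaks the first sentence of your SDP paragraph. Second, in this paper Theorem~\ref{thm:ARV} is \emph{deduced from} Theorem~\ref{thm:replace}, so invoking Theorem~\ref{thm:ARV} here is circular; at best you may appeal directly to the ARV structure theorem of~\cite{ARV04}, but then you cannot use the $\sqrt{\log n}$ integrality-gap bound as a black box, since that bound is precisely what is being established.

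The deeper problem is that your ``main obstacle'' paragraph does not close the argument. The ARV structure theorem produces sets that are \emph{far apart} in a negative-type metric $d^*$; what you need are pairs that are \emph{close} in the graph metric $d_G$. Nothing in the hypotheses of Theorem~\ref{thm:replace} hands you a negative-type metric---it is a purely combinatorial statement about a vertex expander---so ``restrict the ARV extraction to $d_G$-balls of radius $\sqrt{\log|V|}$'' has no content until you specify \emph{which} Hilbert-space embedding of $V$ you are feeding into the ARV chaining argument, and why that embedding interacts well with $d_G$-balls. In~\cite{NRS05} this embedding is built explicitly (the ``Hilbertian geometric argument''), and the ``combinatorial construction'' is exactly the assembly of the resulting matchings into a single constant edge-expander. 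You have correctly located this as the crux, but the proposal supplies neither ingredient.
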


It is shown in~\cite{NRS05} that the $\lesssim \sqrt{\log n}$ bound on the length of the new edges in Theorem~\ref{thm:replace} is asymptotically tight. The proof of Theorem~\ref{thm:replace} is involved, and cannot be described here: it has two components, a combinatorial construction, as well a purely Hilbertian geometric argument based on, and simpler than, the original algorithm of~\cite{ARV04}. We shall now explain how Theorem~\ref{thm:replace} implies Theorem~\ref{thm:ARV} (this is somewhat different from the deduction in~\cite{NRS05}, which deals with a different semidefinite program for Sparsest Cut with uniform demands).

\begin{proof}[Proof of Theorem~\ref{thm:ARV} assuming Theorem~\ref{thm:replace}]
An application of (the easy direction of) Lemma~\ref{lem:average distance} shows that in order to prove Theorem~\ref{thm:ARV} it suffices to show that if $(\M,d)$ is an $n$-point metric space of negative type, with $\frac{1}{n^2}\sum_{x,y\in \M}d(x,y)=1$, then there exists a mapping $F:\M\to \R$ which is $1$-Lipschitz and such that $\frac{1}{n^2}\sum_{x,y\in \M}|F(x)-F(y)|\gtrsim 1/\sqrt{\log n}$. In what follows we use the standard notation for closed balls: for $x\in \M$ and $t\ge 0$, set $B(x,t)=\{y\in \M:\ d(x,y)\le t\}$.

Choose $x_0\in \M$ with $\frac{1}{n}\sum_{y\in \M} d(x_0,y)=r\eqdef
\min_{x\in \M}\frac{1}{n}\sum_{y\in \M} d(x,y)$.  Then $r\le
\frac{1}{n^2}\sum_{x,y\in \M} d(x,y)=1$, implying $1\ge
\frac{1}{n}\sum_{y\in \M} d(x_0,y)> \frac{2}{n}|\M\setminus
B(x_0,2)|$, or $|B(x_0,2)|> n/2$. Similarly $|B(x_0,4)|> 3n/4$.

Assume first that $\frac{1}{n^2}\sum_{x,y\in B(x_0,4)}d(x,y)\le
\frac14$ (this will be the easy case). Then
\begin{multline*}
1=\frac{1}{n^2}\sum_{x,y\in \M} d(x,y)\le \frac14+\frac{2}{n^2}\sum_{x\in \M}\sum_{y\in \M\setminus B(x_0,4)}\Big(d(x,x_0)+d(x_0,y)\Big)\\=\frac14+\frac{2r}{n}|\M\setminus B(x_0,4)|+\frac{2}{n}\sum_{y\in \M\setminus B(x_0,4)}d(x_0,y)\le \frac34+\frac{2}{n}\sum_{y\in \M\setminus B(x_0,4)}d(x_0,y),
\end{multline*}
or $\frac{1}{n}\sum_{y\in \M\setminus B(x_0,4)}d(x_0,y)\ge \frac18$.
Define a $1$-Lipschitz mapping $F:\M\to \R$ by
$F(x)=d\big(x,B(x_0,2)\big)=\min_{y\in B(x_0,2)} d(x,y)$.  The
triangle inequality implies that for every $y\in \M\setminus
B(x_0,4)$ we have $F(y)\ge \frac12d(y,x_0)$. Thus
\begin{multline*}
\frac{1}{n^2}\sum_{x,y\in \M} |F(x)-F(y)|\ge \frac{|B(x_0,2)|}{n^2}\sum_{y\in \M\setminus B(x_0,4)}d\big(y,B(x_0,2)\big)\\>\frac{1}{2n}\sum_{y\in \M\setminus B(x_0,4)}\frac12d(y,x_0)\gtrsim 1=\frac{1}{n^2}\sum_{x,y\in \M} d(x,y).
\end{multline*}
This completes the easy case, where there is even no loss of
$1/\sqrt{\log n}$ (and we did not use yet the assumption that $d$ is
a metric of negative type).

We may therefore assume from now on that $\frac{1}{n^2}\sum_{x,y\in
B(x_0,4)}d(x,y)\ge\frac14$.
The fact that $d$ is of negative type means that there are vectors $\{v_x\}_{x\in \M}\subseteq L_2$ such that $d(x,y)=\|v_x-v_y\|_2^2$ for all $x,y\in \M$. 

We will show that for a small enough universal constant $\e>0$,
there are two sets $S_1,S_2\subseteq B(x_0,4)$ such that
$|S_1|,|S_2|\ge\e n$ and $d(S_1,S_2)\ge \e^2/\sqrt{\log n}$. Once
this is achieved, the mapping $F:\M\to \R$ given by $F(x)=d(x,S_1)$
will satisfy $\frac{1}{n^2}\sum_{x,y\in \M} |F(x)-F(y)|\ge
\frac{2}{n^2}|S_1|\cdot |S_2|\frac{\e^2}{\sqrt{\log n}}\ge
\frac{2\e^4}{\sqrt{\log n}}$, as desired.

Assume for contradiction that no such $S_1,S_2$ exist. Define a set
of edges $E_0$ on $B(x_0,4)$ by $E_0\eqdef \Big\{\{x,y\}\subseteq
B(x_0,4):\ x\neq y\ \wedge \ d(x,y)<\e^2/\sqrt{\log n}\Big\}$. Our
contrapositive assumption says that any two subsets
$S_1,S_2\subseteq B(x_0,4)$ with $|S_1|,|S_2|\ge \e n\ge
\e|B(x_0,4)|$ are joined by an edge from $E_0$. By a (simple)
general graph theoretical lemma (see~\cite[Lem\ 2.3]{NRS05}), this
implies that, provided $\e\le 1/10$, there exists a subset
$V\subseteq B(x_0,4)$ with $|V|\ge (1-\e)|B(x_0,4)|\gtrsim n$, such
that the graph induced by $E_0$ on $V$, i.e., $G=\left(V,E=E_0\cap
{V\choose 2}\right)$, has  $h(G)\ge \frac12$.

We are now in position to apply the Edge Replacement Theorem, i.e., Theorem~\ref{thm:replace}. We obtain a new set of edges $E'$ on $V$ such that $\alpha(V,E')\gtrsim 1$ and for every $xy\in E'$ we have $d_G(x,y)\lesssim \sqrt{\log n}$. The latter condition means that there exists a path $\{x=x_0,x_1,\ldots,x_m=y\}\subseteq V$ such that $m\lesssim \sqrt{\log n}$ and $x_{i}x_{i-1}\in E$ for every $i\in \{1,\ldots,m\}$. By the definition of $E$, this implies that
\begin{equation}\label{eq:chain}
xy\in E'\implies d(x,y)\le \sum_{i=1}^nd(x_i,x_{i-1})\le m\frac{\e^2}{\sqrt{\log n}}\lesssim \e^2.
\end{equation}

It is a standard fact (the equivalence between edge expansion and a Cheeger inequality) that for every $f:V\to L_1$ we have
\begin{equation}\label{eq:cheeger}
\frac{1}{|E'|}\sum_{xy\in E'} \|f(x)-f(y)\|_1\ge \frac{\alpha(V,E')}{2|V|^2}\sum_{x,y\in V}\|f(x)-f(y)\|_1.
\end{equation}
For a proof of~\eqref{eq:cheeger} see~\cite[Fact\ 2.1]{NRS05}: this is a simple consequence of the cut cone representation, i.e., Corollary~\ref{cor:cut}, since the identity~\eqref{eq:superposition} shows that it suffices to prove~\eqref{eq:cheeger} when $f(x)=\1_S(x)$ for some $S\subseteq V$, in which case the desired inequality follows immediately from the definition of the edge expansion $\alpha(V,E')$.

Since $L_2$ is isometric to a subset of $L_1$ (see, e.g., \cite{Woj91}), it follows from~\eqref{eq:cheeger} and the fact that $\alpha(V,E')\gtrsim 1$ that
\begin{multline}\label{eq:count}
\e\stackrel{\eqref{eq:chain}}{\gtrsim} \frac{1}{|E'|}\sum_{xy\in E'}\sqrt{d(x,y)}=\frac{1}{|E'|}\sum_{xy\in E'}\|v_x-v_y\|_2\\\gtrsim \frac{1}{|V|^2}\sum_{x,y\in V}\|v_x-v_y\|_2\gtrsim \frac{1}{n^2}\sum_{x,y\in V}\sqrt{d(x,y)}.
\end{multline}
Now comes the point where we use the assumption $\frac{1}{n^2}\sum_{x,y\in B(x_0,4)}d(x,y)\ge\frac14$. Since for any $x,y\in B(x_0,4)$ we have $d(x,y)\le 8$, it follows that the number of pairs $(x,y)\in B(x_0,4)\times B(x_0,4)$ with $d(x,y)\ge 1/8$ is at least $n^2/64$. Since $|V|\ge (1-\e)|B(x_0,4)|$, the number of such pairs which are also in $V\times V$ is at least $\frac{n^2}{64}-3\e n^2\gtrsim n^2$, provided $\e$ is small enough. Thus $\frac{1}{n^2}\sum_{x,y\in V}\sqrt{d(x,y)}\gtrsim 1$, and~\eqref{eq:count} becomes a contradiction for small enough $\e$.
\end{proof}

\begin{rem}
The above proof of Theorem~\ref{thm:replace} used very little of the
fact that $d$ is a metric of negative type. In fact, all that was
required was that $d$ admits a quasisymmetric embedding into $L_2$;
see~\cite{NRS05}.
\end{rem}

It remains to say a few words about the proof of
Theorem~\ref{thm:aln}. Unfortunately, the present proof of this
theorem is long and involved, and it relies on a variety of results
from metric embedding theory. It would be of interest to obtain a
simpler proof. Lemma~\ref{duality} implies that
Theorem~\ref{thm:aln} is a consequence of the following embedding
result:

\begin{theorem}[\cite{ALN08}]\label{thm:embed ALN}
Every $n$-point metric space of negative type embeds into Hilbert
space with distortion $\lesssim (\log n)^{\frac12+o(1)}$.
\end{theorem}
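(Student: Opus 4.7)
The plan is to prove Theorem~\ref{thm:embed ALN} by constructing the embedding one scale at a time and combining the scale-maps via the measured descent framework of Krauthgamer--Lee--Mendel--Naor, with the Arora--Rao--Vazirani structural theorem (the higher-dimensional analogue of what underlies Theorem~\ref{thm:replace}) playing the role of the single-scale engine. Fix vectors $\{v_x\}_{x\in \M}\subseteq L_2$ witnessing the negative-type condition $d(x,y)=\|v_x-v_y\|_2^2$, and set up a dyadic ladder of scales $\tau=2^k$ for $k$ ranging over the $O(\log n)$ relevant integers. For each scale I would aim to produce a $1$-Lipschitz map $\phi_\tau:\M\to L_2$ with the property that pairs $x,y$ with $d(x,y)\asymp \tau$ satisfy $\|\phi_\tau(x)-\phi_\tau(y)\|_2\gtrsim \tau/\sqrt{\log n}$.

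The scale-map $\phi_\tau$ would be built from Fr\'echet-type coordinates $x\mapsto \min\{d(x,A),\tau\}$ for carefully chosen random ``anchor'' sets $A\subseteq \M$, calibrated so that with probability $\Omega(1)$ a typical pair at distance $\tau$ is separated with margin $\tau/\sqrt{\log n}$. The existence of such anchor families is the heart of the argument and rests on a negative-type analogue of Theorem~\ref{thm:replace}: for any two macroscopically large subsets $S,T\subseteq \M$ whose pairwise distances are $\gtrsim \tau$ on average, one can extract $S'\subseteq S$, $T'\subseteq T$ of comparable size with $d(S',T')\gtrsim \tau/\sqrt{\log n}$. I would import this as a black box, noting that its proof goes through a random Gaussian projection of the vectors $v_x$ onto a line, followed by a chaining/region-growing step that exploits both the triangle inequality and the $\ell_2^2$-triangle inequality $\|v_x-v_y\|_2^2+\|v_y-v_z\|_2^2\ge \|v_x-v_z\|_2^2$.

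Finally, I would aggregate the scale maps using measured descent: rather than summing $\phi_\tau$ directly over all $O(\log n)$ scales (which would cost an additional $\sqrt{\log n}$ factor and yield only distortion $\log n$), weight the $\phi_\tau$-contribution at a point $x$ by a quantity proportional to $\sqrt{\log(|B(x,2\tau)|/|B(x,\tau)|)}$. A telescoping bound shows that the sum of these weights along any fixed $x$ is $\lesssim \sqrt{\log n}$, and a stability computation shows that most pairs see a weight of at least $(\log n)^{-o(1)}$ at their critical scale. Combined with the single-scale lower bound this yields an embedding with distortion $\sqrt{\log n}\cdot (\log n)^{o(1)}=(\log n)^{\frac12+o(1)}$.

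The main obstacle I expect is not the single-scale ARV step per se, but rather coupling it to measured descent without incurring more than a $(\log n)^{o(1)}$ penalty: one must handle clustered geometries where many points sit in a small ball, pairs whose distances lie between consecutive dyadic scales, and the union bound needed to make the anchor-based separation hold simultaneously across scales. These effects require intricate combinatorial bookkeeping and are exactly what generates the conjecturally removable $o(1)$ exponent mentioned after Theorem~\ref{thm:aln}.
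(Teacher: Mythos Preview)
Your high-level strategy---ARV-type structure at each scale, glued via measured descent---is exactly the paper's route, but your execution has a genuine gap. The single-scale maps you construct separate pairs at scale $\tau$ by $\tau/\sqrt{\log n}$ with $n$ \emph{global}; weighting by $w_\tau(x)\asymp\sqrt{\log(|B(x,2\tau)|/|B(x,\tau)|)}$ and summing in $\ell_2$ does give Lipschitz constant $\lesssim\sqrt{\log n}$ by telescoping $\sum_\tau w_\tau^2$, but for the lower bound at a pair $(x,y)$ you are relying on the contribution $w_{\tau_0}\cdot\tau_0/\sqrt{\log n}$ at the critical scale $\tau_0\asymp d(x,y)$, and $w_{\tau_0}$ can vanish---there is no reason the ball around $x$ grows between radii $\tau_0$ and $2\tau_0$. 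Your claim that ``most pairs see a weight of at least $(\log n)^{-o(1)}$ at their critical scale'' is false for clustered geometries; this is not bookkeeping but the central obstacle, and with only a global $\sqrt{\log n}$ in the denominator you cannot do better than distortion $\log n$.

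The paper's fix (Theorem~\ref{thm:new gluing}) is to demand, for \emph{every subset} $S\subseteq\M$ and every scale $\Delta$, a $1$-Lipschitz map $\phi_{S,\Delta}$ separating pairs in $S$ at distance $\ge\Delta$ by $\Delta/(\log|S|)^{1/2}$: the dependence on $|S|$ rather than $n$ is precisely what compensates when the local growth ratio is small. Producing such localized maps from ARV is itself nontrivial---raw ARV yields a single pair of well-separated sets, not the random-zero-set structure of Definition~\ref{def:zero}, and the upgrade requires Lee's refined analysis together with the iterative point-reweighting of Chawla--Gupta--R\"acke (Theorem~\ref{thm:arv zero}). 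The gluing lemma Theorem~\ref{thm:new gluing} is likewise a genuine enhancement of KLMN measured descent, not the vanilla version; the $\log\log n$ loss (your $o(1)$) arises there.
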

Theorem~\ref{thm:embed ALN} improves over the previously
known~\cite{CGR08} bound of $\lesssim (\log n)^{3/4}$ on the
Euclidean distortion of $n$-point metric spaces of negative type. As
we shall explain below, Theorem~\ref{thm:embed ALN} is tight up to the $o(1)$ term.

The proof of Theorem~\ref{thm:embed ALN} uses the following notion
from~\cite{ALN08}:
\begin{definition}[Random zero-sets~\cite{ALN08}]\label{def:zero} Fix $\Delta,\ \zeta>0$,  and $p\in (0,1)$. A metric space $(\M,d)$
is said to admit a random zero set at scale $\Delta$, which is
$\zeta$-spreading with probability $p$, if there is a probability
distribution $\mu$ over subsets $Z\subseteq \M$ such that  $\mu
\left(\left\{Z:\ y\in Z\ \wedge\ d(x,Z)\ge
\Delta/\zeta\right\}\right)\ge p$ for every $x,y\in \M$ with
$d(x,y)\ge \Delta$. We denote by $\zeta(\M;p)$ the least $\zeta>0$
such that for every $\Delta>0$, $\M$ admits a random zero set at
scale $\Delta$ which is $\zeta$-spreading with probability $p$.
\end{definition}
The connection to metrics of negative type is due to the following
theorem, which can be viewed as the main structural consequence
of~\cite{ARV04}. Its proof uses~\cite{ARV04} in conjunction with two
additional ingredients: an analysis of the algorithm of~\cite{ARV04}
due to~\cite{Lee05}, and a clever iterative application of the
algorithm of~\cite{ARV04}, due to~\cite{CGR08}, while carefully
reweighting points at each step.

\begin{theorem}[Random zero sets for negative type
metrics]\label{thm:arv zero} There exists a universal constant $p>0$
such that any $n$-point metric space $(\M,d)$ of negative type
satisfies $\zeta(\M;p)\lesssim \sqrt{\log n}$.
\end{theorem}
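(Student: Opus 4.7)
The plan is to fix a negative-type metric $(\M,d)$ on $n$ points with $L_2$ representation $\{v_x\}_{x\in \M}\subseteq L_2$ satisfying $d(x,y)=\|v_x-v_y\|_2^2$, and, after rescaling, fix the scale $\Delta=1$. The goal is to construct a distribution $\mu$ on subsets $Z\subseteq \M$ so that for every pair $x,y\in\M$ with $d(x,y)\ge 1$ one has $\mu(\{Z:y\in Z\ \wedge\ d(x,Z)\ge 1/\zeta\})\ge p$ with $\zeta\lesssim \sqrt{\log n}$ and $p$ an absolute constant, and then to rescale back to an arbitrary $\Delta$.

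The core analytic input I would extract from Arora--Rao--Vazirani (in the form refined by Lee, as stated in the excerpt) is the following geometric fact: using hyperplane rounding of the $v_x$ combined with the chaining argument at the heart of Theorem~\ref{thm:ARV}, for any probability measure $\nu$ on $\{(x,y)\in\M\times\M:d(x,y)\ge 1\}$ there is a random pair of subsets $(S_1,S_2)$ of $\M$ with $d(S_1,S_2)\gtrsim 1/\sqrt{\log n}$, such that a constant $\nu$-fraction of pairs $(x,y)$ lie in $S_1\times S_2$ with constant probability. Viewing $Z=S_2$ gives a random zero set that $(1/\sqrt{\log n})$-spreads a constant $\nu$-mass of pairs — an ``almost-everywhere'' version of what we want. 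Establishing this statement is where the negative-type hypothesis is really used, via the Cauchy--Schwarz-type relation between $\|v_x-v_y\|_2$ and $d(x,y)^{1/2}$ that was exploited in the proof of Theorem~\ref{thm:ARV}.

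Next I would upgrade ``most pairs'' to ``every pair'' by the Chewi--Gupta--R\"acke-style iterative reweighting argument alluded to in the excerpt. Starting from the uniform measure $\nu_0$ on far pairs, I apply the ARV/Lee construction to produce a random zero set $Z_0$ covering a constant $\nu_0$-fraction of pairs; I then define $\nu_1$ to concentrate on the residual pairs (those covered with small probability by $Z_0$), renormalize, and repeat. Averaging the resulting random zero sets $Z_0,Z_1,\ldots,Z_{T}$ with geometrically decreasing weights after $T=O(\log n)$ rounds produces one distribution $\mu$ that spreads every far pair with constant probability, at the cost of worsening the spreading constant only by a universal multiplicative factor.

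The main obstacle, and the place where I would expect to spend most of the technical effort, is showing that the per-iteration guarantees do not compound: each round individually loses a factor $\sqrt{\log n}$, and a naive union over $O(\log n)$ iterations would produce $(\log n)^{3/2}$ or worse. Avoiding this requires using Lee's quantitatively precise form of ARV — in which the separation bound depends on the \emph{entropy} of the reweighted measure rather than on its support size — together with a careful amortization of the reweighting potential, so that the cumulative loss from iteration is only a constant factor. Formalizing this bookkeeping, and verifying that the spreading bound $\zeta\lesssim \sqrt{\log n}$ survives aggregation, is the heart of the argument.
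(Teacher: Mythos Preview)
Your outline matches the paper's treatment: the survey does not give a detailed proof of this theorem but attributes it exactly to the combination you describe---the ARV separation argument, Lee's refined analysis of it, and the iterative reweighting procedure of Chawla--Gupta--R\"acke (note the spelling; you wrote ``Chewi''). The one small discrepancy worth flagging is that the paper (following CGR) describes the reweighting as being on \emph{points} rather than on pairs; this interfaces directly with the weighted-point form of the ARV structure theorem, whereas the pair-measure version of ARV you invoke is not something that theorem immediately delivers, so you would need to reduce to point weights via marginals before the iteration.
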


Random zero sets are related to embeddings as follows. Fix
$\Delta>0$. Let $(\M,d)$ be a finite metric space, and fix
$S\subseteq \M$. By the definition of $\zeta(S;p)$, there exists a
distribution $\mu$ over subsets $Z\subseteq S$ such that for every
$x,y\in S$ with $d(x,y)\ge \Delta$ we have $ \mu
\left(\left\{Z\subseteq S:\ y\in Z\ \wedge\ d(x,Z)\ge
\Delta/\zeta(S;p)\right\}\right)\ge p$. Define $\f_{S,\Delta}:\M\to
L_2(\mu)$ by $\f_{S,\Delta}(x)=d(x,Z)$. Then $\f_{S,\Delta}$ is
$1$-Lipschitz, and for every $x,y\in S$ with $d(x,y)\ge \Delta$,
\begin{eqnarray}\label{eq:single}
\left\|\f_{S,\Delta}(x)-\f_{S,\Delta}(y)\right\|_{L_2(\mu)}=
\left(\int_{2^S} \left[d(x,Z)-d(y,Z)\right]^2d\mu(Z)\right)^{1/2}
\ge \frac{\Delta\sqrt{p}}{\zeta(S;p)}.
\end{eqnarray}

The remaining task is to ``glue" the mappings $\{\f_{S,\Delta}:\
\Delta>0,\ S\subseteq \M\}$ to form an embedding of $\M$ into
Hilbert space with the distortion claimed in
Theorem~\ref{thm:embed ALN}. A key ingredient of the proof of
Theorem~\ref{thm:embed ALN} is the embedding method called
``Measured Descent", that was developed in~\cite{KLMN05}. The
results of~\cite{KLMN05} were stated as embedding theorems rather than a gluing procedure; the
realization that a part of the arguments of~\cite{KLMN05} can be formulated explicitly as a general ``gluing lemma" is due
to~\cite{Lee05}. In~\cite{ALN08} it was necessary to enhance
the Measured Descent technique in order to prove the following key
theorem, which together with~\eqref{eq:single} and
Theorem~\ref{thm:arv zero} implies Theorem~\ref{thm:embed ALN}. See also~\cite{ALN-frechet} for a different enhancement of Measured Descent, which also implies Theorem~\ref{thm:embed ALN}. The
proof of Theorem~\ref{thm:new gluing} is quite intricate; we refer
to~\cite{ALN08} for the details.

\begin{theorem}\label{thm:new gluing}
Let $(\M,d)$ be an $n$-point metric space. Suppose that there is
 $\e\in [1/2,1]$ such
that for every $\Delta > 0$, and every subset $S \subseteq \M$,
there exists a 1-Lipschitz map $\f_{S,\Delta} : \M \to L_2$ with $
||\f_{S,\Delta}(x) - \f_{S,\Delta}(y)||_2 \gtrsim \Delta/(\log
|S|)^\varepsilon$ whenever $x,y \in S$ and $d(x,y) \ge \Delta$. Then
$c_2(\M) \lesssim(\log n)^{\varepsilon} \log \log n$.
\end{theorem}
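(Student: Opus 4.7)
The plan is to build $F:\M\to L_2$ as a direct sum of maps contributed by all dyadic scales $\Delta_k=2^k$ with $k\in\Z$ ranging over $O(\log \mathrm{diam}(\M))$ values, using the hypothesis maps $\f_{S,\Delta}$ inside local subsets $S$ produced by random partitions, and then averaging over a second index of logarithmic ``resolutions'' in order to replace the $\sqrt{\log n}$ that standard Measured Descent~\cite{KLMN05} pays for gluing by the much smaller factor $\log \log n$.

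First I would fix, for each scale $k$ and each resolution $\ell\in\{0,1,\ldots,\lfloor\log_2\log_2 n\rfloor\}$, a random partition $P_{k,\ell}$ of $\M$ into clusters of diameter $\le 2^k$ that is ``padded'' at level $2^{k-\ell}$ in the sense of Bartal/CKR-style partitions (each point lies in its cluster together with its $2^{k-\ell}$-ball with probability $\gtrsim 2^{-\ell}$, say). For each cluster $C$ of $P_{k,\ell}$ apply the hypothesis with $S=C$ and $\Delta=2^{k-\ell-O(1)}$, obtaining a $1$-Lipschitz map $\f_{C,\Delta}:\M\to L_2$ whose restriction to $C$ stretches pairs at distance $\ge \Delta$ by at least $\Delta/(\log|C|)^\e$. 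Using Lee's partition-of-unity gluing (as isolated in~\cite{Lee05} from~\cite{KLMN05}) I would concatenate the maps $\{\f_{C,\Delta}\}_{C\in P_{k,\ell}}$ into a single globally $1$-Lipschitz map $\psi_{k,\ell}:\M\to L_2$ whose restriction to each cluster $C$ retains the lower bound from~\eqref{eq:single}, and then set
\begin{equation*}
F(x)\ \eqdef\ \bigoplus_{k,\ell}\sqrt{w_{k,\ell}}\,\psi_{k,\ell}(x),\qquad w_{k,\ell}\ \eqdef\ \frac{1}{\log\log n},
\end{equation*}
so that $F$ is automatically $O(\sqrt{\log n})$-Lipschitz from the overall number of scales and $O(\log\log n)$-resolution count (and can be renormalised to be $1$-Lipschitz).

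The lower bound on $\|F(x)-F(y)\|_2$ for a pair with $d(x,y)=\Delta\asymp 2^K$ is the heart of the argument. Measured Descent associates to each $x$ the non-decreasing ``local dimension'' sequence $\mathrm{rk}_x(k)\eqdef\lceil\log_2|B(x,2^k)|\rceil$, which takes at most $\lceil\log_2 n\rceil$ distinct values as $k$ varies. A pigeonhole choice of resolution $\ell$, calibrated to match $\mathrm{rk}_x(K)-\mathrm{rk}_x(K-\ell)$ to the padding probability of $P_{K,\ell}$, produces, with probability $\gtrsim 2^{-\ell}$, a cluster $C$ of $P_{K,\ell}$ containing both $x$ and $y$ with $|C|\le 2^{\mathrm{rk}_x(K)}$. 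On this event the contribution of $\psi_{K,\ell}$ to $\|F(x)-F(y)\|_2^2$ is at least $w_{K,\ell}\cdot \Delta^2/(\log|C|)^{2\e}\gtrsim \Delta^2/((\log n)^{2\e}\log\log n)\cdot 2^{-\ell}$, and summing only the ``winning'' resolution for each pair delivers $\|F(x)-F(y)\|_2\gtrsim \Delta/((\log n)^\e\sqrt{\log\log n})$ after another $\sqrt{\log\log n}$ is absorbed into the final distortion bound.

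The main obstacle I anticipate is calibrating the padding probabilities against the local-dimension increments $\mathrm{rk}_x(k)-\mathrm{rk}_x(k-\ell)$ so that \emph{some} resolution $\ell$ always delivers a cluster of the correct size with sufficient probability, while simultaneously ensuring that the total Lipschitz cost (summed over $k$ and $\ell$) does not exceed $(\log n)^\e\log\log n$. In the standard KLMN setup (all $\ell$ equal, or a single $\ell$ chosen from the local growth of one point) this balance only yields $(\log n)^\e\sqrt{\log n}$; the enhancement of~\cite{ALN08} needed here is a pointwise-adaptive reweighting of the resolutions, which I would implement by replacing the uniform $w_{k,\ell}$ above by $w_{k,\ell}(x)$ depending on the local dimension at $x$, and then verifying (through a convexity/Cauchy--Schwarz argument over the two sides $x$ and $y$ of each pair) that the asymmetric weights still glue into a globally $(\log n)^\e\log\log n$-Lipschitz map into $L_2$.
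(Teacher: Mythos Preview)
The paper does not prove this theorem: it states that ``the proof of Theorem~\ref{thm:new gluing} is quite intricate; we refer to~\cite{ALN08} for the details.'' So there is no in-paper argument to compare against; the benchmark is the construction in~\cite{ALN08} (or the alternative enhancement in~\cite{ALN-frechet}).

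Your high-level plan---Measured-Descent gluing with an extra ``resolution'' index $\ell$---is in the spirit of~\cite{ALN08}, but as written it does not close, and you have correctly located the failure point yourself. With uniform weights $w_{k,\ell}=1/\log\log n$ over $O(\log n)$ scales and $O(\log\log n)$ resolutions, $F$ is $O(\sqrt{\log n})$-Lipschitz, while your lower bound comes from a \emph{single} winning pair $(K,\ell)$ and gives only $\|F(x)-F(y)\|_2\gtrsim \Delta\cdot 2^{-\ell/2}/\big((\log n)^{\e}\sqrt{\log\log n}\big)$. The resulting distortion is of order $(\log n)^{1/2+\e}$, not $(\log n)^{\e}\log\log n$; the extra $\sqrt{\log n}$ is precisely the loss that plain Measured Descent incurs and that the enhancement of~\cite{ALN08} is designed to remove.

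Your proposed fix---replacing $w_{k,\ell}$ by a point-dependent $w_{k,\ell}(x)$---is not a well-defined $L_2$ embedding: in a direct sum $\bigoplus\sqrt{w_{k,\ell}}\,\psi_{k,\ell}$ the scalar coefficients cannot depend on the argument without completely changing the Lipschitz analysis. In~\cite{ALN08} the pointwise adaptivity enters instead through the maps themselves (truncated distance-to-random-zero-set coordinates whose truncation depends on the local ball sizes $|B(x,2^k)|$), so that for each fixed $x$ the squared contributions over $k$ \emph{telescope} against the increments of $\log|B(x,2^k)|$. It is this telescoping over many scales---not the selection of one winning scale and resolution---that kills the $\sqrt{\log n}$ factor, and it is exactly the step your sketch leaves unspecified.
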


The following corollary is an obvious consequence of
Theorem~\ref{thm:embed ALN}, due to the fact that $L_1$ is a metric
space of negative type.
\begin{cor}\label{cor:L1}
Every $X\subseteq L_1$ embeds into $L_2$ with distortion $\lesssim
(\log |X|)^{\frac12+o(1)}$.
\end{cor}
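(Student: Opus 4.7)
The plan is to combine Corollary~\ref{cor:NEG} with Theorem~\ref{thm:embed ALN} in a single step. Recall that Corollary~\ref{cor:NEG} furnishes an isometric embedding of the metric space $(L_1(\Omega,\mu),\|\cdot\|_1^{1/2})$ into Hilbert space, which by definition says that $L_1$ is a metric space of negative type.

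The first step is to observe that negative type is hereditary. Given any $X\subseteq L_1$, the restriction of the isometric embedding from Corollary~\ref{cor:NEG} to $X$ provides an isometric embedding of $(X,\|\cdot\|_1^{1/2})$ into Hilbert space. Equivalently, one may invoke Schoenberg's criterion recalled in Section~\ref{sec:L1}: the negative semidefiniteness of $(\|f_i-f_j\|_1)_{i,j=1}^n$ on the orthogonal complement of the diagonal is inherited by any subfamily $\{f_{i_1},\ldots,f_{i_k}\}$. Either way, the finite metric space $(X,d_X)$ with $d_X(f,g)\eqdef \|f-g\|_1$ is of negative type.

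The second (and final) step is to apply Theorem~\ref{thm:embed ALN} to the $n$-point metric space $(X,d_X)$, where $n=|X|$. This produces a map $F:X\to L_2$ whose distortion is $\lesssim (\log n)^{\frac12+o(1)}$, which is exactly the content of Corollary~\ref{cor:L1}.

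There is no real obstacle here, since all the work has been done upstream: the hard analytic input is Theorem~\ref{thm:embed ALN}, whose proof combines the random zero sets of Theorem~\ref{thm:arv zero} with the enhanced Measured Descent gluing of Theorem~\ref{thm:new gluing}. The only thing this corollary adds is the trivial observation that the hypothesis of Theorem~\ref{thm:embed ALN} (being of negative type) applies to every finite subset of $L_1$.
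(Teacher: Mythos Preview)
Your proof is correct and is exactly the argument the paper gives: it states that the corollary is ``an obvious consequence of Theorem~\ref{thm:embed ALN}, due to the fact that $L_1$ is a metric space of negative type.'' You have simply spelled out that one-line deduction in more detail.
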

We stated Corollary~\ref{cor:L1} since it is of special importance:
in 1969, Enflo~\cite{Enflo69} proved that the Hamming cube, i.e.,
$\{0,1\}^k$ equipped with the metric induced  from $\ell_1^k$, has
Euclidean distortion $\sqrt{k}$. Corollary~\ref{cor:L1} says that up
to lower order factors, the Hamming cube is among the most non-Euclidean
subset of $L_1$. There are very few known results of this type,
i.e., (almost) sharp evaluations of the largest Euclidean distortion
of an $n$-point subset of a natural metric space. A notable such
result is Matou\v{s}ek's theorem~\cite{Mat99} that any $n$-point
subset of the infinite binary tree has Euclidean distortion
$\lesssim \sqrt{\log \log n}$, and consequently, due to~\cite{BS05},
the same holds true for $n$-point subsets of, say, the hyperbolic
plane. This is tight due to Bourgain's matching lower
bound~\cite{Bou-tree} for the Euclidean distortion of finite depth
complete binary trees.

\subsubsection{The Goemans-Linial conjecture}\label{sec:GL} Theorem~\ref{thm:aln}
is the best known approximation algorithm for the Sparsest Cut
Problem (and Theorem~\ref{thm:ARV} is the best known algorithm in
the case of uniform demands). But, a comparison of
Lemma~\ref{duality} and Theorem~\ref{thm:embed ALN} reveals a
possible avenue for further improvement: Theorem~\ref{thm:embed ALN}
produces an embedding of negative type metrics into $L_2$ (for which
the bound of Theorem~\ref{thm:embed ALN} is sharp up to lower order
factors), while for Lemma~\ref{duality} all we need is an embedding
into the larger space $L_1$. It was conjectured by Goemans and
Linial (see~\cite{Goe97,Lin-open,Lin02} and~\cite[pg.\
379--380]{Mat02}) that any finite metric space of negative type
embeds into $L_1$ with distortion $\lesssim 1$. If true, this would
yield, via the Goemans-Linial semidefinite relaxation, a constant
factor approximation algorithm for Sparsest Cut.

As we shall see below, it turns out that the Goemans-Linial
conjecture is false, and in fact there exist~\cite{CKN09}
arbitrarily large $n$-point metric spaces $\M_n$ of negative type
for which $c_1(\M_n)\ge (\log n)^c$, where $c$ is a universal
constant. Due to the duality argument in Lemma~\ref{duality}, this
means that the algorithm of Section~\ref{sec:semidefinite} is doomed
to make an error of at least $(\log n)^c$, i.e., there exist
capacity and demand functions $C_n,D_n:\{1,\ldots,n\}\times
\{1,\ldots,n\}\to [0,\infty)$ for which we have
$M^{**}(C_n,D_n)\lesssim \Phi^*(C_n,D_n)/(\log n)^c$. Such a
statement is referred to in the literature as the fact that the {\em
integrality gap} of the Goemans-Linial semidefinite relaxation of
Sparsest Cut is at least $(\log n)^c$.

\subsubsection{Unique Games hardness and the Khot-Vishnoi integrality
gap}\label{sec:UGC}\label{sec:unique} Khot's Unique Games
Conjecture~\cite{Khot02} is that for every $\e>0$ there exists a
prime $p=p(\e)$ such that there is no polynomial time algorithm
that, given $n\in \N$ and a system of $m$-linear equations in
$n$-variables of the form $x_i-x_j=c_{ij}\mod p$ for some $c_{ij}\in
\N$, determines whether there exists an assignment of an integer
value to each variable $x_i$ such that at least $(1-\e)m$ of the
equations are satisfied, or whether no assignment of such values can satisfy
more than $\e m$ of the equations (if neither of these possibilities occur, then an arbitrary output is allowed). This formulation of the
conjecture is due to~\cite{KKMO07}, where it is shown that it is equivalent to the
original formulation in~\cite{Khot02}. The Unique Games Conjecture
is by now a common assumption that has numerous applications in
computational complexity; see the survey~\cite{Khot10} (in this
collection) for more information.

In~\cite{KV04,CKKRS06} it was shown that the existence of a
polynomial time constant factor approximation algorithm for Sparsest Cut would refute the Unique
Games Conjecture, i.e., one can use a polynomial time constant factor approximation algorithm for
Sparsest Cut to solve in polynomial time the above algorithmic task
for linear equations.

For a period of time in 2004, this computational hardness result led to a strange situation: either the complexity theoretic Unique Games Conjecture is true, or the purely geometric Goemans-Linial conjecture is true, but not both. In a remarkable tour de force, Khot and Vishnoi~\cite{KV04} delved into the proof of their hardness result and managed to construct from it a concrete family of arbitrarily large $n$-point metric spaces $\M_n$ of negative type for which $c_1(\M_n)\gtrsim (\log \log n)^c$, where $c$ is a universal constant, thus refuting the Goemans-Linial conjecture. Subsequently, these Khot-Vishnoi metric spaces $\M_n$ were analyzed in~\cite{KR06}, resulting in the lower bound $c_1(\M_n)\gtrsim \log \log n$. Further work in~\cite{DKSV06} yielded a $\gtrsim \log\log n$ integrality gap for Sparsest Cut with uniform demands, i.e., ``average distortion"  $L_1$ embeddings (in the sense of Lemma~\ref{lem:average distance}) of negative type metrics   were ruled out as well.


\subsubsection{The Bretagnolle, Dacunha-Castelle, Krivine theorem and invariant metrics on Abelian groups}\label{sec:krivine}
A combination of Schoenberg's classical
characterization~\cite{schoenberg} of metric spaces that are
isometric to subsets of Hilbert space, and a theorem of Bretagnolle,
Dacunha-Castelle and Krivine~\cite{BDCK66} (see
also~\cite{WellWill}), implies that if $p\in [1,2]$ and
$(X,\|\cdot\|_X)$ is a separable Banach space such that the metric
space $(X,\|x-y\|_X^{p/2})$ is isometric to a subset of Hilbert
space, then $X$ is (linearly) isometric to a subspace of $L_p$.
Specializing to $p=1$ we see that the Goemans-Linial conjecture is
true for Banach spaces. With this motivation for the Goemans-Linial
conjecture in mind, one notices that the Goemans-Linial conjecture
is part of a natural one parameter family of conjectures which
attempt to extend the theorem Bretagnolle, Dacunha-Castelle and
Krivine to general metric spaces rather than Banach spaces: is it
true that for $p\in [1,2)$ any metric space $(\M,d)$ for which
$(\M,d^{p/2})$ is isometric to a subset of $L_2$ admits a
bi-Lipschitz embedding into $L_p$? This generalized Goemans-Linial
conjecture turns out to be false for all $p\in [1,2)$; our example
based on the Heisenberg group furnishes counter-examples for all
$p$.

It is also known that certain invariant metrics on Abelian groups satisfy the Goemans-Linial conjecture:
\begin{theorem}[\cite{ANV07}]\label{thm:lamp}
Let $G$ be a finite Abelian group,
equipped with an invariant metric $\rho$. Suppose that $2\le m\in \mathbb
N$ satisfies $mx=0$ for all $x\in G$. Denote
$D=c_2\left(G,\sqrt{\rho}\right)$. Then $c_1(G,\rho)\lesssim
D^4\log m$.
\end{theorem}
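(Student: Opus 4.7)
The strategy is to exploit Fourier analysis on the finite Abelian group $G$, where the assumption $mG=0$ forces every character to factor through a cyclic quotient of order dividing $m$. I would proceed in four steps. First, average the hypothesized Hilbert embedding to produce an invariant negative-definite kernel: let $f:G\to L_2$ realize $c_2(G,\sqrt{\rho})=D$ with the normalization $\rho(x,y)/D^2\le \|f(x)-f(y)\|_2^2\le \rho(x,y)$, and set
$$
\psi(y)\eqdef \frac{1}{|G|}\sum_{g\in G}\|f(g+y)-f(g)\|_2^2.
$$
Translation-invariance of $\rho$ gives $\rho(y,0)/D^2\le \psi(y)\le \rho(y,0)$, and summing the Hilbertian negative-type inequality over $g$ shows that $(x,y)\mapsto \psi(x-y)$ is conditionally negative definite on $G$.

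Second, by Bochner's theorem on the finite Abelian group $G$, such an invariant kernel admits the Fourier expansion
$$
\psi(x)=\sum_{\chi\in \widehat G\setminus\{1\}}a_\chi\bigl(1-\mathrm{Re}\,\chi(x)\bigr),\qquad a_\chi\ge 0,
$$
and because $mG=0$ each $\chi$ factors through a cyclic quotient $\pi_\chi:G\twoheadrightarrow \mathbb Z/m_\chi$ with $m_\chi\mid m$. Third, for every $\chi$ compose $\pi_\chi$ with the classical arc-cut isometric embedding of the cycle $(\mathbb Z/m_\chi,d_{m_\chi})$ into $L_1$ to obtain $F_\chi:G\to L_1$ with $\|F_\chi(x)-F_\chi(y)\|_1=d_{m_\chi}(\pi_\chi(x),\pi_\chi(y))$; the numerical comparison $1-\mathrm{Re}\,\chi(x)\asymp (d_{m_\chi}(\pi_\chi(x),0)/m_\chi)^2$ connects the cycle side to the Fourier side.

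The fourth step is the crux of the argument and also its main obstacle. The Fourier formula expresses $\psi$ as a sum of \emph{squared} normalized cycle distances, whereas any admissible $L_1$ combination of the $F_\chi$'s is a sum of \emph{unsquared} cycle distances; reconciling these is precisely what costs the $\log m$ factor. I would resolve this by partitioning the characters into $O(\log m)$ dyadic bands $\bigl\{\chi:d_{m_\chi}(\pi_\chi(x-y),0)/m_\chi\asymp 2^{-j}\bigr\}$ for $0\le j\lesssim \log m$, on each band converting the squared expansion of $\psi$ into a linear combination of the $F_\chi$'s by a Cauchy--Schwarz-style estimate weighted by $a_\chi$, and finally summing over bands. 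The upper bound $\|F(x)-F(y)\|_1\lesssim \rho(x,y)$ follows from the cut-cone representation of the $F_\chi$'s and the inequality $\psi\le \rho$, while the lower bound $\|F(x)-F(y)\|_1\gtrsim \rho(x,y)/(D^4\log m)$ requires the pointwise dyadic argument. The two factors of $D^2$ --- one from the averaging step and one from the Cauchy--Schwarz inversion between the $L_2^2$ expansion of $\psi$ and the linear $L_1$ sum of cycle embeddings --- combine to produce the $D^4$ in the statement, while the $\log m$ factor is the price of the $O(\log m)$ scales that must be traversed because every character has order at most $m$.
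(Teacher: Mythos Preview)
The paper does not contain a proof of this theorem: it is quoted from~\cite{ANV07} without argument, so there is no ``paper's own proof'' to compare against. Your sketch has the right architecture --- averaging to an invariant conditionally negative definite kernel, Fourier-expanding via Bochner, exploiting that every character has order dividing $m$, and then feeding each character through the isometric $L_1$ embedding of its target cycle --- and this is indeed the skeleton of the argument in the cited reference.

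That said, your fourth step as written has a genuine problem. You propose to partition characters into dyadic bands
\[
\bigl\{\chi:\ d_{m_\chi}(\pi_\chi(x-y),0)/m_\chi\asymp 2^{-j}\bigr\},
\]
but this partition depends on the particular pair $(x,y)$. An $L_1$ embedding must be a single map $F:G\to L_1$ defined once and for all; you cannot choose different weightings of the $F_\chi$'s for different pairs of points. What actually works is to fix, for each dyadic scale $j\lesssim \log m$, a single weighted $\ell_1$-sum of \emph{all} the $F_\chi$'s (or of truncated versions thereof), and then for a given pair $(x,y)$ argue that at the scale $j$ matching $\rho(x,y)$ the contribution is large enough. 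The distinction matters: your Cauchy--Schwarz inversion must be set up so that the weights are independent of $(x,y)$, and the $\log m$ loss then comes from summing the upper bounds across the $O(\log m)$ scale maps, not from a pair-dependent band count. Your accounting for the two factors of $D^2$ is also somewhat loose --- the second $D^2$ does not come cleanly from ``Cauchy--Schwarz inversion'' but rather from comparing $\psi$ back to $\rho$ after the square-root/linear mismatch has been resolved --- and would need to be made precise.
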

It is an interesting open question whether the dependence on the
exponent $m$ of the group $G$ in Theorem~\ref{thm:lamp} is
necessary. Can one construct a counter-example to the Goemans-Linial
conjecture which is an invariant metric on the cyclic group
$C_n$ of order $n$? Or, is there for every $D\ge 1$ a constant
$K(D)$ such that for every invariant metric $\rho$ on $C_n$ for
which $c_2\left(G,\sqrt{\rho}\right)\le D$ we have $c_1(G,\rho)\le
K(D)$?

One can view the above discussion as motivation for why one might
consider the Heisenberg group as a potential counter-example to the
Goemans-Linial conjecture. Assuming that we are interested in
invariant metrics on groups, we wish to depart from the setting of
Abelian groups or Banach spaces, and if at the same time we would
like our example to have some useful analytic properties (such as
invariance under rescaling and the availability of a group norm),
the Heisenberg group suggests itself as a natural candidate. This
plan is carried out in Section~\ref{sec:heisenberg}.
\section{Embeddings of the Heisenberg group}\label{sec:heisenberg}

The purpose of this section is to discuss Theorem~\ref{thm:ckn} and
Theorem~\ref{thm:rate} from the introduction. Before doing so, we
have an important item of unfinished business: relating the
Heisenberg group to the Sparsest Cut Problem. We will do this in
Section~\ref{sec:snowflake}, following~\cite{LN06}.

In preparation, we need to recall the Carnot-Carath\'eodory geometry of the continuous Heisenberg group $\H$, i.e., $\R^3$ equipped with the non-commutative product $(a,b,c)\cdot (a',b',c')=
(a+a',b+b',c+c'+ab'-ba')$. Due to lack of space, this will have to be a crash course, and we refer  to the relevant introductory sections of~\cite{ckn} for a more thorough discussion.

The identity element of $\H$ is $e=(0,0,0)$, and the inverse element
of $(a,b,c)\in \H$ is $(-a,-b,-c)$. The center of $\H$ is the
$z$-axis $\{0\}\times\{0\}\times \R$. For $g\in \H$ the {\em
horizontal plane} at $g$ is defined as
$\H_g=g(\R\times\R\times\{0\})$. An affine line $L\subseteq \H$ is
called a {\em horizontal line} if for some $g\in \H$ it passes
through $g$ and is contained in the affine plane $\H_g$. The
standard scalar product $\langle \cdot,\cdot\rangle$ on $\H_e$
naturally induces a scalar product $\langle \cdot,\cdot\rangle_g$ on
$\H_g$ by $\langle gx,gy\rangle_g=\langle x,y\rangle$. Consequently,
we can define the Carnot-Carath\'eodory metric $d^\H$ on $\H$ by
letting $d^\H(g,h)$ be the infimum of lengths of smooth curves
$\gamma:[0,1]\to \H$ such that $\gamma(0)=g$, $\gamma(1)=h$ and for
all $t\in [0,1]$ we have $\gamma'(t)\in H_{\gamma(t)}$ (and, the
length of $\gamma'(t)$ is computed with respect to the scalar
product $\langle \cdot,\cdot\rangle_{\gamma(t)}$). The {\em ball-box
principle} (see~\cite{gromov}) implies that
$d^\H\big((a,b,c),(a',b',c')\big)$ is bounded above and below by a
constant multiple of $|a-a'|+|b-b'|+\sqrt{|c-c'+ab'-ba'|}$.
Moreover, since the integer grid $\H(\Z)$ is a discrete cocompact
subgroup of $\H$, the word metric $d_W$ on $\H(\Z)$ is bi-Lipschitz
equivalent to the restriction of $d^\H$ to $\H(\Z)$ (see, e.g,
\cite{BBI}). For $\theta>0$ define the dilation operator
$\delta_\theta:\H\to \H$ by $\delta_\theta(a,b,c)=(\theta a ,\theta
b,\theta^2 c)$. Then for all $g,h\in \H$ we have
$d^\H(\delta_\theta(g),\delta_\theta(h))=\theta d^\H(g,h)$. The
Lebesgue measure $\L_3$ on $\R^3$ is a Haar measure of $\H$, and the
volume of a $d^\H$-ball of radius $r$ is proportional to $r^4$.

\subsection{Heisenberg metrics with isometric $L_p$ snowflakes}\label{sec:snowflake} For every
$(a,b,c)\in \H$ and  $p\in [1,2)$, define
\begin{eqnarray*}\label{eq:defM}
M_p(a,b,c)=
\sqrt[4]{(a^2+b^2)^2+4c^2}\cdot \left(\cos\left(\frac{p}{2}\arccos\left(\frac{a^2+b^2}{\sqrt{(a^2+b^2)^2+4c^2}}\right)\right)\right)^{1/p}.
\end{eqnarray*}
It was shown in~\cite{LN06} that $M_p$ is a {\em group norm} on
$\H$, i.e., for all $g,h\in \H$ and $\theta\ge 0$ we have
$M_p(gh)\le M_p(g)+M_p(h)$, $M_p(g^{-1})=M_p(g)$ and
$M_p(\delta_\theta(g))=\theta M_p(g)$. Thus $d_p(g,h)\eqdef
M_p(g^{-1}h)$ is a left-invariant metric on $\H$. The metric $d_p$
is bi-Lipschitz equivalent to $d^\H$ with distortion of order
$1/\sqrt{2-p}$ (see~\cite{LN06}). Moreover, it was shown
in~\cite{LN06} that $(\H,d_p^{p/2})$ admits an isometric embedding
into $L_2$. Thus, in particular, the metric space $(\H,d_1)$, which
bi-Lipschitz equivalent to $(\H,d^\H)$, is of negative type.

The fact that $(\H,d^\H)$ does not admit a bi-Lipschitz embedding
into $L_p$ for any $1\le p<\infty$ will show that the generalized
Goemans-Linial conjecture (see Section~\ref{sec:krivine}) is false.
In particular, $(\H,d_1)$, and hence by a standard rescaling
argument also $(\H(\Z),d_1)$, is a counter-example to the
Goemans-Linial conjecture. Note that it is crucial here that we are
dealing with the function space $L_p$ rather than the sequence space
$\ell_p$, in order to use a compactness argument to deduce from this
statement that there exist arbitrarily large $n$-point metric spaces
$(\M_n,d)$ such that $(\M_n,d^{p/2})$ is isometric to a subset of
$L_2$, yet $\lim_{n\to \infty} c_p(\M_n)=\infty$. The fact that this
statement follows from non-embeddability into $L_p$ is a consequence
of a well known ultrapower argument (see~\cite{Hei80}), yet for
$\ell_p$ this statement is false (e.g., $\ell_2$ does not admit a
bi-Lipschitz embedding into $\ell_p$, but all finite subsets of
$\ell_2$ embed isometrically into $\ell_p$). Unfortunately, this
issue creates substantial difficulties in the case of primary
interest $p=1$. In the reflexive range $p>1$, or for a separable
dual space such as $\ell_1$ ($=c_0^*$), the non-embeddability of
$\H$ follows from a natural extension of a classical result of
Pansu~\cite{Pan89}, as we explain in Section~\ref{sec:pansu}. This
approach fails badly when it comes to embeddings into $L_1$: for
this purpose a novel method of Cheeger and Kleiner~\cite{ckbv} is
needed, as described in Section~\ref{sec:CK}.


\subsection{Pansu differentiability}\label{sec:pansu}

Let $X$ be a Banach space and $f:\H\to X$. Following~\cite{Pan89},
$f$ is said to have a Pansu derivative at $x\in \H$ if for every
$y\in \H$ the limit $ D_f^x(y)\eqdef \lim_{\theta\to 0}
\big(f(x\delta_\theta(y))-f(x)\big)/\theta$ exists, and $D^x_f:\H\to
X$ is a group homomorphism, i.e., for all $y_1,y_2\in \H$ we have
$D_f^x(y_1y_2^{-1})=D_f^x(y_1)-D_f^y(y_2)$. Pansu
proved~\cite{Pan89} that every $f:\H\to \R^n$ which is Lipschitz in
the metric $d^\H$ is Pansu differentiable almost everywhere. It was
observed in~\cite{LN06,CK06-RNP} that this result holds true if the
target space $\R^n$ is replaced by any Banach space with the
Radon-Nikod\'ym property, in particular $X$ can be any reflexive
Banach space such as $L_p$ for $p\in (1,\infty)$, or a separable
dual Banach space such as $\ell_1$. As noted by Semmes~\cite{Sem96},
this implies that $\H$ does not admite a bi-Lipschitz embedding into
any Banach space $X$ with the Radon-Nikod\'ym property: a
bi-Lipschitz condition for $f$ implies that at a point $x\in \H$ of
Pansu differentiability, $D_f^x$ is also bi-Lipschitz, and in
particular a group isomorphism. But that's impossible since $\H$ is
non-commutative, unlike the additive group of $X$.

\subsection{Cheeger-Kleiner differentiability}\label{sec:CK}
Differentiability theorems fail badly when the target space is
$L_1$, even for functions defined on $\R$; consider Aronszajn's
example~\cite{aronszajn} of the ``moving indicator function"
$t\mapsto \1_{[0,t]}\in L_1$. For $L_1$-valued Lipschitz functions
on $\H$, Cheeger and Kleiner~\cite{ckbv,ckmetmon} developed an
alternative differentiation theory, which is sufficiently strong to
show that $\H$ does not admit a bi-Lipschitz embedding into $L_1$.
Roughly speaking, a differentiation theorem states that in the
infinitesimal limit, a Lipschitz mapping converges to a mapping that
belongs to a certain ``structured" subclass of mappings (e.g.,
linear mappings or group homomorphisms). The Cheeger-Kleiner theory
shows that, in a sense that will be made precise below, $L_1$-valued
Lipschitz functions on $\H$ are in the infinitesimal limit similar
to Aronszajn's moving indicator.

For an open subset $U\subseteq \H$ let $\Cut(U)$ denote the space of
(equivalences classes up to measure zero) of measurable subsets of
$U$. Let $f:U\to L_1$ be a Lipschitz function. An infinitary variant
of the cut-cone decomposition of Corollary~\ref{cor:cut}
(see~\cite{ckbv}) asserts that there exists a measure $\Sigma_f$ on
$\Cut(U)$, such that for all $x,y\in U$ we have
$\|f(x)-f(y)\|_1=\int_{\Cut(U)}|\1_E(x)-\1_E(y)|d\Sigma_f(E)$. The
measure $\Sigma_f$ is called the {\em cut measure} of $f$. The idea
of Cheeger and Kleiner is to detect the ``infinitesimal regularity"
of $f$ in terms of the infinitesimal behavior of the measure
$\Sigma_f$; more precisely, in terms of the shape of the sets $E$ in
the support of $\Sigma_f$, after passing to an infinitesimal limit.

\begin{theorem}[Cheeger-Kleiner differentiability
theorem~\cite{ckbv,ckmetmon}]\label{thm:CKBV} For almost every $x\in
U$ there exists a measure $\Sigma_f^x$ on $\Cut(\H)$  such that for
all $y,z\in \H$ we have
\begin{equation}\label{eq:CK limit}
\lim_{\theta\to 0}
\frac{\|f(x\delta_\theta(y))-f(x\delta_\theta(z))\|_1}{\theta}=\int_{\Cut(\H)}|\1_E(y)-\1_E(z)|d\Sigma_f^x(E).
\end{equation}
Moreover, the measure $\Sigma_f^x$ is supported on affine
half-spaces whose boundary is a vertical plane, i.e., a plane which
isn't of the form $\H_g$ for some $g\in \H$ (equivalently, an
inverse image, with respect to the orthogonal projection from $\R^3$
onto  $\R\times\R\times \{0\}$, of a line in $\R\times\R\times
\{0\}$).
\end{theorem}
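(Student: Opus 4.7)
The plan is to combine the infinitary cut-cone decomposition $\|f(x)-f(y)\|_1 = \int_{\Cut(U)}|\1_E(x)-\1_E(y)|\,d\Sigma_f(E)$ with the Heisenberg analog of De Giorgi's structure theorem for sets of finite perimeter, due to Franchi, Serapioni and Serra Cassano. The Lipschitz hypothesis on $f$, tested against the cut-cone identity on small Heisenberg balls, translates into a BV-type bound on $\Sigma_f$: the Radon measure on $U$ defined by $A\mapsto \int_{\Cut(U)}(\textrm{CC-perimeter of } E \textrm{ in } A)\,d\Sigma_f(E)$ is dominated by a constant multiple of the Haar measure $\L_3$ restricted to $U$. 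In particular, $\Sigma_f$-a.e. cut $E$ is a set of locally finite Carnot-Carath\'eodory perimeter.

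Next, for $x\in U$ and $\theta>0$ small, introduce the rescalings $f_{x,\theta}(y)\eqdef f(x\delta_\theta(y))/\theta$. Its cut measure is the push-forward of $\Sigma_f$ under $E\mapsto \delta_{1/\theta}(x^{-1}E)$, rescaled by $1/\theta$. The uniform expected-perimeter bound of the previous step, combined with compactness of Radon measures on $\Cut(\H)$ with bounded CC-perimeter mass on compact sets, allows the extraction of a subsequence $\theta_k\to 0$ along which the cut measures converge to a Radon measure $\Sigma_f^x$ on $\Cut(\H)$; passing to the limit in the cut-cone identity for $f_{x,\theta_k}$ then yields \eqref{eq:CK limit}.

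The heart of the argument, and the main obstacle, is to show that for $\L_3$-a.e. $x\in U$ the measure $\Sigma_f^x$ is concentrated on affine half-spaces bounded by vertical planes. Here I invoke the Franchi-Serapioni-Serra Cassano rectifiability theorem: every $E\subseteq \H$ of locally finite CC-perimeter has the property that, at CC-perimeter-a.e. point $p$ of its reduced boundary, the rescaled sets $\delta_{1/\theta}(p^{-1}E)$ converge as $\theta\to 0$ to a vertical affine half-space; horizontal half-spaces are excluded because, by the ball-box principle, any horizontal plane $\H_g$ has vanishing CC-perimeter density. I then apply Fubini to the finite Radon measure on $\Cut(U)\times U$ obtained by integrating the CC-perimeter measure of each $E$ against $d\Sigma_f(E)$, disintegrate with respect to the projection onto $U$, and conclude that for $\L_3$-a.e. $x$ the slice of $\Sigma_f$ at $x$---which after rescaling is precisely $\Sigma_f^x$---is supported on cuts whose blow-up at $x$ is a vertical half-space. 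Independence of the subsequence $\theta_k$ at such points then follows from the rigidity of vertical-half-space blow-ups. The real-variable compactness and Fubini pieces are essentially routine; the deep input, and the step without which the theorem cannot be proved, is the Franchi-Serapioni-Serra Cassano rectifiability theorem in the Heisenberg group.
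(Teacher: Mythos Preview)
Your proposal is correct and follows essentially the first of the two proof strategies the paper attributes to Cheeger and Kleiner. In the paper's account of the approach of~\cite{ckbv}, the Lipschitz assumption on $f$ forces the cut measure $\Sigma_f$ to be supported on sets of finite Carnot--Carath\'eodory perimeter, with a total perimeter bound $\int_{\Cut(U)}\mathrm{PER}(E,B(p,1))\,d\Sigma_f(E)\lesssim 1$; the conclusion then follows by invoking the infinitesimal structure theory for finite-perimeter sets in $\H$ due to Franchi, Serapioni and Serra~Cassano. This is exactly your route: perimeter bound from the cut-cone identity, blow-up, and the Franchi--Serapioni--Serra~Cassano rectifiability theorem as the deep input.

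It is worth noting, for comparison, that the paper also sketches a second, genuinely different proof from~\cite{ckmetmon}. That argument avoids perimeter entirely: using \emph{metric differentiation}, one reduces to the case where the cut measure is supported on \emph{monotone} sets (sets whose intersection with every horizontal line is, up to measure zero, a subray), and then proves a classification theorem to the effect that monotone subsets of $\H$ are half-spaces. What this buys is independence from the geometric-measure-theoretic machinery of finite perimeter in Carnot groups; the price is a bespoke and nontrivial classification of monotone sets. Your approach trades that classification for the off-the-shelf (but deep) Franchi--Serapioni--Serra~Cassano theorem, which is precisely the trade-off the paper identifies between the two proofs.
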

Theorem~\ref{thm:CKBV} is incompatible with $f$ being bi-Lipschitz,
since the right hand side of~\eqref{eq:CK limit} vanishes when $y,z$
lie on the same coset of the center of $\H$, while if $f$ is
bi-Lipschitz the left hand side of~\eqref{eq:CK limit} is at least a
constant multiple of $d^\H(y,z)$.

\subsection{Compression bounds for $L_1$ embeddings of the Heisenberg
group}\label{sec:compression} Theorem~\ref{thm:ckn} and
Theorem~\ref{thm:rate} are both a consequence of the following
result from~\cite{ckn}:
 \begin{theorem}[Quantitative central
 collapse~\cite{ckn}]\label{thm:quant}
\label{tmain} There exists a universal constant $c\in (0,1)$ such
that for every $p\in \H$, every $1$-Lipschitz $f:B(p,1)\to L_1$, and
every $\e\in \left(0,\frac14\right)$, there exists $r\ge \e$ such
that with respect to Haar measure, for at least half
 of the points $x\in B(p,1/2)$, at least half of
the points $(x_1,x_2)\in B(x,r)\times B(x,r)$ which lie on the same
coset of the center satisfy:
\begin{equation*}
\label{compression} \|f(x_1)-f(x_2)\|_{1}\leq
\frac{d^\H(x_1,x_2)}{(\log(1/\e))^c}\, .
\end{equation*}
\end{theorem}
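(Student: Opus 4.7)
The plan is to combine the infinitary cut-measure representation of $L_1$-valued Lipschitz maps (the continuous analogue of Corollary~\ref{cor:cut}, see~\cite{ckbv}) with a quantitative form of the Cheeger--Kleiner statement that cuts in the support of $\Sigma_f$ are asymptotically vertical. Using the representation
$$\|f(x_1)-f(x_2)\|_1=\int_{\Cut(B(p,1))}|\1_E(x_1)-\1_E(x_2)|\,d\Sigma_f(E),$$
the goal reduces to finding a scale $r\ge\e$ at which the ``non-vertical part'' of $\Sigma_f$, restricted to most balls $B(x,r)$, carries only a $(\log(1/\e))^{-c}$ fraction of the total mass. This suffices because for any vertical half-space $E$, the indicator $\1_E$ is constant along each coset of the center of $\H$, so $|\1_E(x_1)-\1_E(x_2)|=0$ for every pair $(x_1,x_2)$ on a common central coset.

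First I would attach to each $E\in\Cut(B(p,1))$ and each pair $(x,r)$ a verticality defect $N_E(x,r)$, defined as a normalized $L_1$-distance from $E\cap B(x,r)$ to the nearest vertical half-space. The reduction is then: if, for a large fraction of $x\in B(p,1/2)$, one has $\int N_E(x,r)\,d\Sigma_f(E)\le\eta$, then by Markov's inequality the $\Sigma_f$-mass of cuts with $N_E(x,r)\ge\sqrt{\eta}$ is at most $\sqrt{\eta}$, whereas each cut with $N_E(x,r)<\sqrt{\eta}$ contributes essentially zero to $\|f(x_1)-f(x_2)\|_1$ for most pairs $(x_1,x_2)$ on the same central coset in $B(x,r)$. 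Choosing $\eta=(\log(1/\e))^{-2c}$ and invoking a second Chebyshev step over pairs in $B(x,r)\times B(x,r)$ yields the stated pointwise bound for at least half of such pairs.

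The real work, and the main obstacle, is producing a scale $r\ge\e$ at which $\int N_E(x,r)\,d\Sigma_f(E)$ is uniformly small. The strategy is a pigeonhole over the $\lesssim\log_2(1/\e)$ dyadic scales $r_j=2^{-j}$, combined with a total non-verticality budget that takes the form of a quantitative Heisenberg isoperimetric inequality
$$\int_{B(p,1/2)}\left(\sum_{j\,:\,r_j\ge\e}N_E(x,r_j)^q\right)d\L_3(x)\lesssim \mathrm{Per}(E;B(p,1))$$
for some $q>1$ dictated by the Carnot--Carath\'eodory structure. Integrating this against $d\Sigma_f(E)$, using that $f$ being $1$-Lipschitz bounds the total perimeter mass of $\Sigma_f$, and then pigeonholing over the scales $r_j$, produces a single $r_j\ge\e$ at which the mean non-verticality is at most $(\log(1/\e))^{-1/q}$, yielding $c=1/q$.

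Establishing this budget inequality is the delicate step: it is the quantitative surrogate for the compactness and blow-up arguments of~\cite{ckbv,ckmetmon}, and it amounts to controlling how a perimeter-bounded cut in $\H$ can alternate between ``looking vertical'' and ``looking tilted'' across many scales. The non-commutativity of $\H$ is essential here, since horizontal tangent planes at different base points of $\partial E$ give rise, after taking commutators, to nontrivial vertical shifts that must be paid for in perimeter; turning this observation into an honest pointwise summable inequality, rather than a limiting statement, is where the entire argument lives. Once that inequality is in place, the rest is an organized sequence of Fubini, Markov and Chebyshev applications combined with the cut-measure representation, first at the level of averages over $B(p,1/2)$ and then at the level of pairs inside the selected ball $B(x,r)$.
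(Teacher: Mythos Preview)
Your architecture---cut-measure representation, a scale-summable ``defect'' controlled by total perimeter, then pigeonhole over the $\asymp\log(1/\e)$ dyadic scales followed by Markov/Chebyshev---matches the paper's in outline, but the intermediate quantity you choose to control is not the one the paper uses, and this difference is where the real content lies.

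The paper does \emph{not} work with a verticality defect $N_E(x,r)$. It measures instead the \emph{non-monotonicity} ${\rm NM}_{B(x,r)}(E)$ of $E$ along horizontal lines: how far $E\cap L$ and $(\H\setminus E)\cap L$ are from being sub-rays, averaged over horizontal lines $L$ through $B(x,r)$. This quantity is linked to horizontal perimeter by a kinematic (integral-geometric) identity---perimeter is an average over horizontal lines of the number of boundary crossings---and that is what makes the scale-summable budget actually provable. The analytic core is then the stability result Theorem~\ref{thm:stability}: small non-monotonicity on $B(x,r)$ forces $E$ to be close, on the much smaller ball $B(x,\e r)$, to a half-space. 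Crucially, that half-space is \emph{not} asserted to be vertical. The endgame is a separate argument: if all cuts are (approximately) arbitrary half-spaces then $\|f(\cdot)-f(\cdot)\|_1$ is (approximately) additive along every affine line, vertical ones included, and the square-root metric on a vertical line is incompatible with such additivity. So the collapse on central cosets is obtained \emph{without} ever showing that cuts are close to vertical half-spaces.

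Your proposed budget inequality for $N_E$ is therefore not what the paper establishes, and it is not clear it can be obtained from perimeter as directly as the non-monotonicity budget: the passage from finite horizontal perimeter to ``blow-ups are vertical half-spaces'' is precisely the Franchi--Serapioni--Serra~Cassano structure theory, and making that quantitative at the level your inequality demands would amount to proving a harder stability theorem than Theorem~\ref{thm:stability}. There is also a genuine gap in your Markov step: $\Sigma_f$ need not have finite total mass, so ``the $\Sigma_f$-mass of cuts with $N_E(x,r)\ge\sqrt\eta$ is at most $\sqrt\eta$'' does not follow from $\int N_E\,d\Sigma_f\le\eta$. The paper confronts exactly this issue and resolves it differently, using the isoperimetric inequality in $\H$ to show that the \emph{metric contribution} of the excluded cuts is small even though their $\Sigma_f$-measure may be infinite.
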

It isn't difficult to see that Theorem~\ref{thm:quant} implies
Theorem~\ref{thm:ckn} and Theorem~\ref{thm:rate}. For example, in
the setting of Theorem~\ref{thm:ckn} we are given a bi-Lipschitz
embedding $f:\{1,\ldots,n\}^3\to L_1$, and using either the general
extension theorem of~\cite{LN05} or a partition of unity argument,
we can extend $f$ to a Lipschitz (with respect to $d^\H$) mapping
$\bar f:[1,n]^3\to L_1$, whose Lipschitz constant is at most a
constant multiple of the Lipschitz constant of $f$.
Theorem~\ref{thm:quant} (after rescaling by $n$)  produces a pair of
points $y,z\in [1,n]^3$ of distance $\gtrsim \sqrt{n}$, whose
distance is contracted under $\bar f$ by $\gtrsim (\log n)^c$. By
rounding $y,z$ to their nearest integer points in
$\{1,\ldots,n\}^3$, we conclude that $f$ itself must have
bi-Lipschitz distortion $\gtrsim (\log n)^c$. The deduction of
Theorem~\ref{thm:rate} from Theorem~\ref{thm:quant} is just as
simple; see~\cite{ckn}.

Theorem~\ref{thm:quant} is a quantitative version of
Theorem~\ref{thm:CKBV}, in the sense it gives a definite lower bound
on the macroscopic scale at which a given amount of collapse of
cosets of the center, as exhibited by the differentiation
result~\eqref{eq:CK limit}, occurs. As explained in~\cite[Rem.\
2.1]{ckn}, one cannot hope in general to obtain rate bounds in differentiation results such as~\eqref{eq:CK limit}. Nevertheless,
there are situations where ``quantitative differentiation results"
have been successfully proved; important precursors of Theorem~\ref{thm:quant}
include the work of Bourgain~\cite{Bou87}, Jones~\cite{jones88},
Matou\v{s}ek~\cite{Mat99}, and Bates, Johnson, Lindenstrauss,
Preiss, Schechtman~\cite{BJLPS99}. Specifically, we should mention
that Bourgain~\cite{Bou87} obtained a lower bound on $\e>0$ such
that any embedding of an $\e$-net in a unit ball of an
$n$-dimensional normed space $X$ into a  normed space $Y$ has
roughly the same distortion as the distortion required to embed all
of $X$ into $Y$, and Matou\v{s}ek~\cite{Mat99}, in his study of
embeddings of trees into uniformly convex spaces, obtained
quantitative bounds on the scale at which ``metric differentiation"
is almost achieved, i.e., a scale at which discrete geodesics are
mapped by a Lipschitz function to ``almost geodesics". These earlier
results are in the spirit  of Theorem~\ref{thm:quant},
though the proof of Theorem~\ref{thm:quant} in~\cite{ckn} is
substantially more involved.

We shall now say a few words on the proof of
Theorem~\ref{thm:quant}; for lack of space this will have to be a
rough sketch, so we refer to~\cite{ckn} for more details, as well as
to the somewhat different presentation in~\cite{CKN09}. Cheeger and
Kleiner obtained two different proofs of Theorem~\ref{thm:CKBV}. The
first proof~\cite{ckbv} started with the important observation that
the fact that $f$ is Lipschitz forces the cut measure $\Sigma_f$ to
be supported on sets with additional regularity, namely sets of {\em
finite perimeter}. Moreover, there is a definite bound on the {\em
total perimeter}: $\int_{\Cut(U)}
\mathrm{PER}(E,B(p,1))d\Sigma_f(E)\lesssim 1$, where
$\mathrm{PER}(E,B(p,1))$ denotes the perimeter of $E$ in the ball
$B(p,1)$ (we refer to the book~\cite{luigetal}, and the detailed
explanation in~\cite{ckbv,ckn} for more information on these
notions). Theorem~\ref{thm:quant} is then proved in~\cite{ckbv} via
an appeal to results~\cite{italians1,italians2} on the infinitesimal
structure of sets of finite perimeter in $\H$. A different proof of
Theorem~\ref{thm:quant} was found in~\cite{ckmetmon}. It is based on
the notion of  {\em metric differentiation}, which is used
in~\cite{ckmetmon} to reduce the problem to mappings $f:\H\to L_1$
for which the cut measure is supported on {\em monotone sets}, i.e.,
sets $E\subseteq \H$ such that for every horizontal line $L$, up to
a set of measure zero, both $L\cap E$ and $L\cap (\H\setminus E)$
are either empty or subrays of $L$. A non-trivial classification of
monotone sets is then proved in~\cite{ckmetmon}: such sets are up to
measure zero half-spaces.

This second proof of Theorem~\ref{thm:quant} avoids completely the
use of perimeter bounds. Nevertheless, the starting point of the
proof of Theorem~\ref{thm:quant} can be viewed as a hybrid argument,
which incorporates both perimeter bounds, and a new classification
of {\em almost} monotone sets. The quantitative setting of
Theorem~\ref{thm:quant} leads to issues that do not have analogues
in the non-quantitative proofs (e.g., the approximate classification
results of ``almost" monotone sets in balls cannot be simply that
such sets are close to half-spaces in the entire ball;
see~\cite[Example\ 9.1]{ckn}).

In order to proceed we need to quantify the extent to which a set
$E\subseteq B(x,r)$ is monotone. For a horizontal line $L\subseteq
\H$ define the non-convexity ${\rm NC}_{B(x,r)}(E,L)$ of $(E,L)$ on
$B(x,r)$ as the infimum of $\int_{L\cap B(x,r)}\left|\1_I-\1_{E\cap
L\cap B_r(x)}\right|d\mathcal H_L^1$ over all sub-intervals
$I\subseteq L\cap B_r(x)$. Here $\mathcal H_L^1$ is the
$1$-dimensional Hausdorff measure on $L$ (induced from the metric
$d^\H$). The non-monotonicity of $(E,L)$ on $B(x,r)$ is defined to
be ${\rm NM}_{B(x,r)}(E,L)\eqdef {\rm NC}_{B(x,r)}(E,L)+{\rm
NC}_{B(x,r)}(\H\setminus E,L)$. The total non-monotonicity of $E$ on
$B(x,r)$ is defined as:
\begin{equation*}
{\rm NM}_{B(x,r)}(E)\eqdef
\frac{1}{r^4}\int_{\mathrm{lines}(B(x,r))}{\rm
NM}_{B(x,r)}(E,L)d\mathcal{N}(L),
\end{equation*}
where $\mathrm{lines}(U)$ denotes the set of horizontal lines in
$\H$ which intersect $U$, and $\mathcal N$ is the left invariant
measure on $\mathrm{lines} (\H)$, normalized so that the measure of
$\mathrm{lines}(B(e,1))$ is $1$.

The following stability result for monotone sets constitutes the
bulk of~\cite{ckn}:
\begin{theorem}\label{thm:stability}
There exists a universal constant $a>0$ such that if a measurable
set $E\subseteq B(x,r)$ satisfies ${\rm NM}_{B(x,r)}(E)\le \e^a$
then there exists a half-space $\mathcal P$ such that
$$
\frac{\mathscr{L}_3\left((E\cap B_{\e r}(x))\triangle \mathcal
P\right)}{\mathscr{L}_3(B_{\e r}(x))}<\e^{1/3}.
$$
\end{theorem}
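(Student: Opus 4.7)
The plan is to prove a quantitative stability version of the Cheeger--Kleiner classification of monotone sets as vertical half-spaces. By left invariance of $d^\H$ and the dilation identity $d^\H(\delta_\theta(g),\delta_\theta(h))=\theta d^\H(g,h)$, I first normalize to $x=e$ and $r=1$: under this rescaling both $\mathrm{NM}_{B(x,r)}(E)$ and the conclusion are scale- and translation-invariant, so the hypothesis becomes $\int_{\mathrm{lines}(B(e,1))}{\rm NM}_{B(e,1)}(E,L)\,d\mathcal N(L)\le \e^a$.

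The first step is a Markov/Chebyshev selection: there is a subset $\mathcal G\subseteq \mathrm{lines}(B(e,1))$ with $\mathcal N(\mathcal G)\ge 1-\e^{a/2}$ on which $\mathrm{NM}_{B(e,1)}(E,L)\le \e^{a/2}$. For each such good line both $E\cap L\cap B(e,1)$ and its complement are $\mathcal H_L^1$-close to subintervals of $L\cap B(e,1)$, so $E\cap L\cap B(e,1)$ itself is $\e^{a/2}$-close to a ray $R_L\subseteq L\cap B(e,1)$ with endpoint $p_L\in L$.

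Next I would assemble these one-dimensional rays into a global candidate half-space. Parametrizing horizontal lines in $\H$ by a base point in the horizontal plane $\H_e$ and a direction $\nu\in S^1\subseteq \H_e$, the assignment $L\mapsto p_L$ on $\mathcal G$ gives a discretized boundary. An averaging/best-fit argument with respect to $\mathcal N$ produces a linear functional $\ell$ on $\R\times\R$ and a threshold $\beta\in\R$ for which the vertical half-space $\mathcal P\eqdef \{g\in\H:\ \ell(\pi(g))\le \beta\}$ (with $\pi:\H\to \R\times\R\times\{0\}$ the projection onto the horizontal plane) agrees line-by-line with $R_L$ for $\mathcal N$-most $L\in\mathcal G$. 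Finally, a Fubini integration restricted to the lines that meet the sub-ball $B(e,\e)$ converts the one-dimensional ray estimates into a three-dimensional volume bound: each such line hits $B(e,\e)$ in a sub-segment of length $\asymp\e$, so after absorbing the powers from the previous two steps into the choice of $a$, the errors compound to an estimate of the form $\mathscr{L}_3((E\cap B(e,\e))\triangle \mathcal P)<\e^{1/3}\mathscr{L}_3(B(e,\e))$.

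The main obstacle, and presumably the reason the theorem ``constitutes the bulk of \cite{ckn}'', is the global assembly step: the map $L\mapsto p_L$ is only approximately defined and has no a priori continuity in the direction $\nu$, while the non-commutativity of $\H$ makes nearby horizontal lines based at different points genuinely inequivalent, ruling out naive interpolation. Moreover, \cite[Example~9.1]{ckn} shows that one cannot hope for proximity to a half-space on all of $B(e,1)$, so the argument must crucially exploit the rescaling slack gained by restricting to $B(e,\e)$: good lines for $B(e,1)$ need not be good for $B(e,\e)$, and controlling the loss of scale when zooming in is what pins down the exponent $a$ in terms of the final $1/3$.
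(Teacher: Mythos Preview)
This survey does not prove the theorem --- it is quoted from \cite{ckn} with the remark that its proof ``constitutes the bulk'' of that paper --- so there is no in-paper argument to compare against. Your outline (normalize, Markov on lines, approximate $E$ on each good line by a sub-ray, assemble, Fubini) is a reasonable scaffold, and you correctly identify the assembly step as the crux and the reason the conclusion is stated only on the sub-ball $B(x,\e r)$.

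There is, however, a genuine error in your target: you insist that $\mathcal P$ be a \emph{vertical} half-space $\{g:\ell(\pi(g))\le\beta\}$ with $\ell$ a linear functional on $\R\times\R$. The paper explicitly warns against this immediately after the statement: ``we are not claiming in Theorem~\ref{thm:stability} that the half-space is vertical.'' The point is that \emph{every} affine half-space in $\R^3$ is monotone in the sense defined here, since horizontal lines are affine lines in $\R^3$ and hence meet any affine hyperplane in at most one point. In particular $E=\{(a,b,c):c\le 0\}$ has $\mathrm{NM}_{B(e,1)}(E)=0$, yet by the ball-box principle $\{c\le 0\}$ differs from every vertical half-space on a fixed positive fraction of $B(e,\e)$, so no best-fit over linear functionals on $\R\times\R$ can possibly produce the conclusion for this $E$. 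The candidate $\mathcal P$ must be allowed to be an arbitrary affine half-space; the paper then disposes of the non-vertical case by the separate argument sketched in the paragraph following the theorem. Beyond this, your assembly step is only asserted (``an averaging/best-fit argument\ldots produces''), not carried out; as you acknowledge, that is where the entire difficulty lies, and the mechanism in \cite{ckn} for extracting a single plane from the incoherent data $L\mapsto p_L$, with quantitative control on the passage from scale $1$ to scale $\e$, is long and not reproduced here.
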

Perimeter bounds are used in~\cite{ckn,CKN09} for two purposes. The
first is finding a controlled scale $r$ such that at most locations,
apart from a certain collection of cuts, the mass of $\Sigma_f$ is
supported on subsets which satisfy the assumption of
Theorem~\ref{thm:stability} (see~\cite[Sec.\ 9]{CKN09}). But, the
excluded cuts may have infinite measure with respect to
$\Sigma_f$. Nonetheless, using perimeter bounds once more, together
with the isoperimetric inequality in $\H$ (see~\cite{Pan82,CDF94}),
it is shown that their contribution to the metric is negligibly
small (see~\cite[Sec.\ 8]{CKN09}).

By Theorem~\ref{thm:stability}, it remains to deal with the
situation where all the cuts in the support of $\Sigma_f$ are close
to half-spaces: note that we are not claiming in
Theorem~\ref{thm:stability} that the half-space is vertical.
Nevertheless, a simple geometric argument shows that even in the
case of cut measures that are supported on general (almost)
half-spaces, the mapping $f$ must significantly distort some
distances. The key point here is that if the cut measure is actually
supported on half spaces, then it follows (after the fact)  that for
{\em every  affine} line $L$, if $x_1,x_2,x_3\in L$ and $x_2$ lies
between $x_1$ and $x_3$ then
$\|f(x_1)-f(x_3)\|_1=\|f(x_1)-f(x_2)\|_1+\|f(x_2)-f(x_3)\|_1$. But
if $L$ is vertical then $d^\H|_L$ is bi-Lipschitz to the {\em square
root} of the difference of the $z$-coordinates, and it is trivial to
verify that this metric on $L$ is not bi-Lipschitz equivalent to a
metric on $L$ satisfying this additivity condition. For the details
of (a quantitative version of) this final step of the argument
see~\cite[Sec.\ 10]{CKN09}.


\bibliographystyle{abbrv}
\bibliography{ICMbib}



\end{document}